\documentclass{my}

\newcommand{\R}{\mathbb{R}}
\newcommand{\N}{\mathbb N}
\newcommand{\Z}{\mathbb Z}

\newcommand{\C}{\mathbb{C}}

\newcommand{\cK}{\mathcal{K}}
\newcommand{\cB}{\mathcal{B}}

\newcommand{\cV}{\mathcal{V}}
\newcommand{\cP}{\mathcal{P}}
\newcommand{\cU}{\mathcal{U}}

\newcommand{\wt}{\widetilde}
\newcommand{\wb}[1]{\overline{#1}}
\newcommand{\eps}{\varepsilon}

\DeclareMathOperator{\Prop}{prop}
\theoremstyle{plain}
\newtheorem*{mainthmrestated}{Theorem \ref{thm_main_theorem} (precise form)}
\DeclareMathOperator{\spec}{spec}

\renewcommand{\Re}{\operatorname{Re}}

\newcommand{\cE}{\mathcal{E}}

\title{Quantitative \textit{K}-homology of uniform Roe algebras}
\author{Hao Guo, Chenxi Wang, Guoliang Yu}
\begin{document}
\maketitle
\begin{abstract}
We introduce quantitative $K$-homology for uniform Roe algebras of discrete proper metric spaces with bounded geometry, based on the controlled $KK$-theory of Willett and Yu. We construct a comparison map from Lipschitz $K$-theory and prove that, for chain geodesic spaces, it is an isomorphism in a quantitative sense; in particular, quantitative $K$-homology is isomorphic to the inverse limit of the Lipschitz $K$-theory groups as the Lipschitz constant tends to zero. We also establish a pairing between quantitative $K$-homology and quantitative $K$-theory. 
\end{abstract}

\tableofcontents

\section{Introduction}

In this paper, we formulate and study a notion of quantitative $K$-homology for uniform Roe algebras. Our definition is modelled on the controlled $KK$-theory of Willett and Yu \cite{WYUCT_control_kk2,controkk1}, as well as on earlier work describing $KK$-theory in terms of localization algebras of separable $C^*$-algebras \cite{DadarlatWillettWu2018Localization,Yu1997Localization}.

Our goal is to relate quantitative $K$-homology to Lipschitz $K$-theory of the underlying space. Lipschitz $K$-theory was defined in \cite{Wang_Xie_Yu_Decay_of_scalar_LipK}, using Lipschitz control on the representatives of $K$-theory classes and equivalence relations, and applied to index pairings to obtain decay estimates for scalar curvature. Subsequently it was shown in \cite{GuoYu2023Covering} that there is a natural pairing between Lipschitz $K$-theory and the quantitative $K$-theory of Roe algebras. Given the natural duality between $K$-theory and $K$-homology, that work suggests that there should be a controlled version of $K$-homology, and that it should be closely related to Lipschitz $K$-theory. In this paper, we introduce such a theory and prove that, for chain geodesic spaces, the two are isomorphic in a quantitative sense.

We first give an idea of the construction, with the precise definitions being provided in Section \ref{sec2}. Let $X$ be a discrete proper metric space with bounded geometry. The uniform Roe algebra $C_u^*(X)$ is the norm closure of the collection of all finite propagation operators in $\cB(\ell^2(X))$. For $r>0$, choose a subset $F$ of the unit ball of $C_u^*(X)$ consisting of operators of propagation at most $r$. For $\eps>0$, the group $K^*_{\eps}(F)$ is defined through equivalence classes of pairs of projections (resp. unitaries) in $M_n(\cK(\ell^2(X))^+)$  that almost commute with elements of $F$ up to $\eps$. We call $K^*_{\eps}(F)$ the \emph{local $K$-homology} of $C_u^*(X)$ with respect to $F$. It is an easy consequence of the definition that shrinking $\eps$ or enlarging $F$ gives natural forget-control maps between these groups. We then define the \emph{quantitative $K$-homology} $QK^*_r(X)$ to be the inverse limit of this system parameterized by such pairs $(F,\eps)$.

This group is naturally the target of a homomorphism from Lipschitz $K$-theory. In brief, a Lipschitz representative of a $K$-theory class acts on $\ell^2(X)$ by multiplication, and the Lipschitz estimate leads to commutator estimates with elements of $F$. This gives, for every $r>0$, comparison maps
\[
\nu\colon  K^{L}_*(C_0(X))\to K^*_{\Omega_X(r,L)}(F),\qquad \Omega_X(r,L)=4rL\sqrt{N_X(r)},
\]
where $N_X(r):=\sup_{x\in X}|B(x,r)|<\infty$ is the supremum of the cardinalities of the closed balls of radius $r$. These maps are compatible with the forget-control maps, which loosen the control on $F$ and on $L$.

In the case that $F=E_r$ is the entire unit ball of operators of propagation at most $r$, we will be interested in a notion of quantitative isomorphism that consists of the following two properties, stated here informally:
\begin{enumerate}[label=(\arabic*)]
    \item (Quantitative surjectivity) Every class in the local $K$-homology group $K^*_{\Omega_X(r,L)}(E_r)$ that lies in the forget-control image from a smaller scale $\Omega_X(r,L')$ is the image of a Lipschitz $K$-theory class. 
    \item (Quantitative injectivity) If the comparison image of a Lipschitz $K$-theory class vanishes, then the class itself vanishes after the Lipschitz constant is increased. 
\end{enumerate}
Our main theorem can be stated as follows; a precise version, with explicit constants, is given in Section 3.
\begin{theorem}\label{thm_main_theorem}
\label{thm main}
    Let $X$ be a discrete proper metric space with bounded geometry, and suppose that $X$ is $(m,C)$-chain geodesic. Let $r\ge \max(m,1)$ and $L> 0$ be such that $4rL\sqrt{N_X(r)}<1/100$. Then the comparison maps from Lipschitz $K$-theory to the local $K$-homology groups $K^*_{\Omega_X(r,L)}(E_r)$ of the uniform Roe algebra are quantitative isomorphisms. 
\end{theorem}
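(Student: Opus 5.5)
The idea is to construct an inverse map from local $K$-homology back to Lipschitz $K$-theory by ``averaging'' almost-commuting operators into multiplication operators. The key observation is that $E_r$ contains the partial isometries $V_{x,y}=e_{x,y}$ for $d(x,y)\le r$, and all diagonal projections $\chi_A$. If a projection $p\in M_n(\cK(\ell^2(X))^+)$ satisfies $\|[p,T]\|<\eps$ for all $T\in E_r$, then in particular it almost commutes with every diagonal multiplication operator. Using bounded geometry, $\|p-\Phi(p)\|$ is small, where $\Phi$ is the conditional expectation onto the diagonal $\ell^\infty(X)\otimes M_n$. To obtain this bound I would average $u p u^*$ over the finite groups of sign unitaries supported on $r$-balls, giving a factor $\sqrt{N_X(r)}$. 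The conditional expectation $\Phi(p)$ is a function $f\colon X\to M_n(\C)$ vanishing at infinity modulo a scalar. Moreover, commutation with $e_{x,y}$ gives $\|f(x)-f(y)\|\le \eps$ whenever $d(x,y)\le r$, since $e_{x,x}[p,e_{x,y}]e_{y,y}=p_{xx}e_{x,y}-e_{x,y}p_{yy}$.

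The chain-geodesic hypothesis then upgrades this local oscillation bound to a global Lipschitz bound. If $d(x,y)=D$, an $m$-chain of length about $CD/m$ with steps $\le m\le r$ gives $\|f(x)-f(y)\|\le \eps(CD/m+1)$. For $\eps=\Omega_X(r,L')$, this yields a Lipschitz constant of order $C\cdot 4rL'\sqrt{N_X(r)}/m$. Because the paper's bookkeeping absorbs these constants into the ``increased Lipschitz constant'' of the quantitative statement, this is enough. The function $f$ is almost a projection, with $\|f^2-f\|\lesssim\eps<1/100$. Applying functional calculus $\chi_{[1/2,\infty)}(f)$ pointwise gives a genuine projection-valued function that is Lipschitz with a constant enlarged by a universal factor. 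It therefore defines a class in $K^L_0(C_0(X))$, and unitaries are handled the same way using polar decomposition.

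For \textbf{quantitative surjectivity}, given a class in $K^*_{\Omega_X(r,L)}(E_r)$ coming from the smaller scale $\Omega_X(r,L')$, I would apply the averaging map $\mu$ above. I would then show that $\nu(\mu[p])=[p]$ in $K^*_{\Omega_X(r,L)}(E_r)$. This holds because the straight-line homotopy $tp+(1-t)\chi(\Phi(p))$, followed by spectral projection, stays within the allowed commutator bound $\Omega_X(r,L)$. The hypothesis $\Omega<1/100$ keeps spectral gaps open throughout, and the gap between $L'$ and $L$ gives the room needed.

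For \textbf{quantitative injectivity}, suppose $\nu[f]=0$, witnessed by a homotopy $p_t$ of $\Omega$-almost commuting projections, after adding trivial pieces. I would apply $\mu$ to the whole path. Each $p_t$ is sent to a Lipschitz projection-valued function with enlarged constant, and the path is norm-continuous. So $\mu(p_t)$ is a Lipschitz homotopy in $K^{L''}$, and $\mu(\nu(f))$ is close to $f$ and hence homotopic to it. Therefore $[f]=0$ in $K^{L''}$.

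The main obstacle is the first step: proving $\|p-\Phi(p)\|$ is small with the sharp constant $\sqrt{N_X(r)}$, and checking that $\Phi(p)$ remains in the right unitization. I would first try the sign-averaging argument, colouring $X$ into $N_X(r)$ classes by bounded geometry, and controlling the off-diagonal part of $p$ at distance $\le r$ and $>r$ separately. The far-off-diagonal part is controlled by commutators with $\chi_A$ for suitable sets $A$.
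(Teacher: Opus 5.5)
Your architecture is the paper's: diagonalize by sign-averaging, read off local oscillation from $e_{xx}[p,e_{xy}]e_{yy}$, globalize with the chain condition, round off by functional calculus or the polar part, use the linear homotopy for surjectivity, and diagonalize a whole witnessing path for injectivity. The gap is in the step you yourself call the main obstacle, and the route you sketch for it does not close.

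Averaging over a \emph{finite} group $G$ of diagonal signs (signs supported on an $r$-ball, or constant on colour classes) multiplies the $(x,y)$ entry of $p$ by the mean of $\theta_x\theta_y$ over $G$. This mean vanishes exactly when some element of $G$ separates $x$ and $y$. But $G$ is constant on each of finitely many classes partitioning $X$. So for infinite $X$ the off-diagonal entries inside an infinite class are left untouched; for your colouring, these are the same-colour pairs at distance $>r$. Commutators with finitely many $\chi_A$ give no norm bound on that remainder, short of separating all pairs of points, i.e.\ using infinitely many signs.

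The missing observation is that the whole sign group $S_X=\{\sum_x\theta_x\chi_x:\theta_x=\pm1\}$ lies in $E_r$, since each element is a self-adjoint unitary of propagation $0$, and it is compact. So you can Haar-average over all of it. Then $\mathcal{E}(p)=\int_{S_X}sps^*\,d\mu_S$ is exactly the block-diagonal part of $p$. Since $p-sps^*=[p,s]s^*$, you get $\|\mathcal{E}(p)-p\|\le\sup_{s\in S_X}\|[p,s]\|<\varepsilon$, with constant $1$ and no use of bounded geometry. Norm continuity of $s\mapsto sps^*$ uses that $p-\sigma(p)$ is compact; alternatively, average over $\{\pm1\}^{F}$ for finite $F\uparrow X$ and pass to the norm limit.

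In particular there is no $\sqrt{N_X(r)}$ loss in this direction. That factor enters only through the Schur test for $\nu$. In the inverse direction it is absorbed by taking $L'=L/M$ with $M=80Cr\sqrt{N_X(r)}$: the Lipschitz constant you produce is of order $C\varepsilon$, while $\varepsilon=\Omega_X(r,L')$ already carries the factor $r\sqrt{N_X(r)}$. What you were missing was a bound at all, not a sharp one.

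Some smaller points.
\begin{itemize}
\item The paper's chain-geodesic condition is $N\le C\,d(x,y)$, which gives $\|g(x)-g(y)\|\le C\alpha\,d(x,y)$ directly. Your bound $\varepsilon(CD/m+1)$ has an additive term, so it is not a Lipschitz bound for small $D$ unless you also note that distinct points are at distance at least $1/C$.
\item In the injectivity step, vanishing in $K^0_\varepsilon(E_r)$ yields, after stabilization, a path of \emph{pairs} $(P_t,Q_t)$ ending at some $(s,s)$ with $s$ not diagonal, not a path of single projections. The fix is to diagonalize both components along the path and concatenate $P_t^\flat$ with $Q_t^\flat$ reversed through $s^\flat$. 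The paper does this after making the stabilizing summands scalar by the rotation of Lemma \ref{lem_K0_element_to_be_zero}, so that the endpoints are fixed by the diagonalization.
\item In surjectivity you should likewise work with normalized pairs $[p,1_n\oplus 0_n]$ rather than single projections.
\item Rather than proving a Lipschitz estimate for pointwise spectral projections (which does hold, via a resolvent bound on a contour around $1$), it is simpler to apply the shift estimate to the rounded projection itself, using $\|[p^\flat,V_{xy}]\|\le\|[p,V_{xy}]\|+2\|p^\flat-p\|$.
\end{itemize}
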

A discrete metric space is $(m,C)$-chain geodesic, where $m>0$, $C\ge 1$ are constants, if any two distinct points $x,y$ can be connected by a sequence of at most $C\cdot d(x,y)$ steps, each step having length at most $m$.

Passing to inverse limits, Theorem \ref{thm_main_theorem} yields an isomorphism $QK^*_r(X)\cong\varprojlim_{L\to 0}K^L_*(C_0(X))$ for every $r\ge\max(m,1)$ (Corollary \ref{cor_inverse_limit}); in particular, $QK^*_r(X)$ does not depend on $r$ in this range, and $QK^*(X)$ is isomorphic to the same inverse limit.

As explained above, the comparison map $\nu$ is defined in a natural manner using the Lipschitz condition on $K$-theory. The main work in proving Theorem \ref{thm main} is to go in the opposite direction, from a cycle for $K^*_\eps(E_r)$ back to a Lipschitz function. For this we use two families of test operators in $E_r$. On the one hand, approximate commutation with all diagonal sign operators forces a projection or unitary in $M_n(\cK(\ell^2(X))^+)$ to be close to a diagonal one, that is, to a matrix-valued function on $X$. On the other hand, approximate commutation with the shifts $\delta_x\mapsto\delta_y$, for $0<d(x,y)\le m$, bounds the variation of this function between nearby points. The chain geodesic condition then turns this local bound into a global Lipschitz bound. Applied to a single cycle, this procedure yields quantitative surjectivity; applied along a homotopy, it gives quantitative injectivity.

The paper is organized as follows. In Section 2, we define quantitative $K$-homology groups. In Section 3, we recall the definition of Lipschitz $K$-theory and construct the comparison maps alluded to above, before proving the main theorem in Section 4. In Section 5, we construct a pairing between the local $K$-homology groups and the quantitative $K$-theory groups, with values in $\Z$. We then compare our theory with the controlled $KK$-theory of Willett and Yu \cite{WYUCT_control_kk2}, defined using finite test sets, and conjecture that the canonical map from $QK^*(X)$ to the corresponding inverse limit in that theory is an isomorphism.

\subsection{Notation and standard notions}

We denote the unitalization of a $C^*$-algebra $A$ by $A^+$, which is distinct from $A$ even if $A$ is already unital (as happens for $\cK(\ell^2(X))$ when $X$ is finite), so that the canonical quotient map $\sigma$ from $M_n(A^+)$ to $M_n(\C)$ is always defined. 
The symbol $\otimes$ always denotes the completed tensor product of Hilbert spaces or the minimal tensor product of $C^*$-algebras. 

By a representation of $A$ we shall mean a nondegenerate representation of $A$ on a Hilbert space $H$, that is, a $*$-homomorphism $\pi\colon A\to \cB(H)$ such that $\wb{\pi(A)H}=H$. For $a\in A$, we usually write simply $a\in \cB(H)$ for $\pi(a)$ to avoid clutter. 

Let $X$ be a discrete proper metric space with bounded geometry. Let $C_0(X)$ denote the continuous functions on $X$ vanishing at infinity, and $\ell^2(X)$ the square-summable functions on $X$. Fix the representation $\rho_X\colon C_0(X)\to \cB(\ell^2(X))$ given by pointwise multiplication. For an operator $T\in\cB(\ell^2(X))$, the support of $T$ is defined to be the complement of the set of all points $(x,y)\in X\times X$ for which there exist functions $f,g\in C_0(X)$ such that $gTf=0$ and $f(x)\ne 0$, $g(y)\neq 0$. The propagation of $T$ is defined to be
\[ 
\Prop(T):=\sup\Set{d(x,y)\mid (x,y)\in \supp(T)}, 
\]
and an operator $T\in\cB(\ell^2(X))$ is said to be locally compact if $fT$ and $Tf$ are compact for all $f\in C_0(X)$. 

Let $\C_u^*[X]_r\subset \cB(\ell^2(X))$ be the subspace of bounded operators with propagation at most $r$. Viewed as a collection of matrices in the obvious way, $\C_u^*[X]_r$ consists of all $\C$-valued $X$-by-$X$ indexed matrices $(T_{xy})_{x,y\in X}$ such that 
\begin{itemize}
    \item[(1)] there exists $M>0$ such that $|T_{xy}|\le M$ for all $x,y\in X$;
    \item[(2)] if $d(x,y)>r$ then $T_{xy}=0$. 
\end{itemize}

The operator norm closure of the union $\bigcup_{r\geq 0}\C_u^*[X]_r$ is the uniform Roe algebra $C^*_u(X)$. The inclusion $C^*_u(X)\subset \cB(\ell^2(X))$ induces a diagonal embedding $a\mapsto a\otimes 1_n$ of $C^*_u(X)$ into $M_n(\cB(\ell^2(X)))$, which we denote by $\pi_X$, or simply $\pi$ when there is no ambiguity. We define $E_r$ to be the unit ball of $\C_u^*[X]_r$. 

For convenience, we let $e_l$ denote the rank $l$ projection in $M_n(\C)$ with $l$ ones in the top-left part of the diagonal and zeros elsewhere. If $l=n$, we write $1_n$ instead to emphasize that it is the identity matrix.

\section{Quantitative \textit{K}-homology of the uniform Roe algebra}
\label{sec2}

In this section, we define quantitative $K$-homology in the case of
uniform Roe algebras and establish some basic results on representatives
and controlled homotopies. Throughout, we set $H=\ell^2(X)$ and use the diagonal inclusion $\pi\colon C^*_u(X)\to M_n(\cB(H))$; we will omit $\pi$ from the notation when there is no ambiguity. As before, $\C_u^*[X]_r$ denotes the subspace of $C_u^*(X)$ consisting of operators of propagation at most $r$, and $E_r$ denotes the unit ball of $\C_u^*[X]_r$.

\subsection{Definitions and inverse limits}
We begin by defining the local groups $K_\varepsilon^*(F)$. Here $F$ may be an arbitrary subset of the unit ball of $C_u^*(X)$; the propagation filtration only enters in Definition \ref{def_quant_K_hom}. 

\begin{define}\label{def_K_hom_K0}
    Let $F$ be a subset of the unit ball of $C_u^*(X)$, and let $\varepsilon>0$. For each positive integer $n$, let $\cP_{n,\varepsilon}(F)$ be the collection of pairs $(p,q)$ of projections in $M_n(\cK(H)^+)$ such that: 
    \begin{enumerate}[label=(\arabic*)]
        \item $\| [p,a]\|<\varepsilon$ and $\|[q,a]\|<\varepsilon$,  $\forall a\in F$;
        \item $[\sigma(p)]=[\sigma(q)]$ in $K_0(\C)$, where $\sigma$ is the canonical quotient map from $M_n(\cK(H)^+)$ to $M_n(\C)$.
    \end{enumerate}
    Let $\cP_{\infty,\varepsilon}(F)$ denote the disjoint union of the $\cP_{n,\varepsilon}(F)$ over all $n$. Let $\sim$ be the equivalence relation on $\cP_{\infty,\varepsilon}(F)$ generated by declaring 
    $(p,q)\sim (p\oplus p',q\oplus p')$, and $(p_1,q_1)\sim (p_2,q_2)$ if the two pairs are connected by a path of projection pairs in $\cP_{\infty,\varepsilon}(F)$.

    We call the resulting set of equivalence classes the \textbf{local $K$-homology} of $C^*_u(X)$ with respect to $F$, and denote it by 
    $$
    K^0_{\varepsilon}(F)\coloneqq\cP_{\infty,\varepsilon}(F)/\sim.
    $$
On this set, define the binary operation 
        \[ [p_1,q_1]+[p_2,q_2]\coloneqq[p_1\oplus p_2, q_1\oplus q_2]. \]
\end{define}

  In degree one, we use unitaries instead of pairs of projections.
  \begin{define}\label{def_K_hom_K1}
      Let $F$ be a subset of the unit ball of $C_u^*(X)$, and let $\varepsilon>0$. For each positive integer $n$, let $\cU_{n,\eps}(F)$ be the collection of unitaries $u$ in $M_n(\cK(H)^+)$ such that 
        $\| [u,a]\|<\varepsilon$, $\|[u^*,a]\|<\eps$ for all $a\in F$.
        
    Let $\cU_{\infty,\eps}(F)$ denote the disjoint union of the $\cU_{n,\eps}(F)$ over all $n$. Let $\sim$ be the equivalence relation on $\cU_{\infty,\eps}(F)$ generated by declaring 
    $u\sim u\oplus 1_l$, and $u_1\sim u_2$ if they are connected by a path of unitaries in $\cU_{\infty,2\varepsilon}(F)$.

   The resulting set of equivalence classes is denoted by 
    $$
    K^1_{\varepsilon}(F)\coloneqq\cU_{\infty,\varepsilon}(F)/\sim.
    $$
On this set, define the binary operation 
        \[ [u_1]+[u_2]\coloneqq[u_1\oplus u_2]. \]
\end{define}

\begin{lemma}\label{lem_local_Khom_group}
    $K^0_{\varepsilon}(F)$ is a well-defined abelian group with identity element $[0,0]$, and $K^1_{\varepsilon}(F)$ is a well-defined abelian group whose identity element is the class $[1]$ of the identity operator in $\cB(H)$.
\end{lemma}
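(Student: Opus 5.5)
We verify, in order: the operation descends to equivalence classes, it is associative and commutative, the stated classes are identities, and inverses exist. Throughout we use that $\pi(a)=a\otimes 1_n$ is diagonal, so a block sum $x\oplus y$ satisfies $[x\oplus y,a]=[x,a]\oplus[y,a]$ and $\|[x\oplus y,a]\|=\max(\|[x,a]\|,\|[y,a]\|)$. Hence direct sums of cycles are again cycles at the same $\varepsilon$. For $K^0$, the condition $[\sigma(p)]=[\sigma(q)]$ is additive, so $\cP_{\infty,\varepsilon}(F)$ and $\cU_{\infty,\varepsilon}(F)$ are closed under $\oplus$. A single generating move on one summand (adding $(p',p')$, resp. $1_l$, or a path) gives a generating move on the sum. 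For stabilization, $(p_1\oplus p',q_1\oplus p')\oplus(p_2,q_2)$ is conjugate by a permutation to $(p_1\oplus p_2\oplus p',q_1\oplus q_2\oplus p')$. So well-definedness reduces to the key step below.

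\emph{Key step: permutation invariance.} Let $W$ be a permutation matrix in $M_n(\C)\subset M_n(\cK(H)^+)$, which shuffles summands. Then $W$ commutes exactly with every $\pi(a)=a\otimes 1_n$. So $(WpW^*,WqW^*)$, resp. $WuW^*$, satisfies the same strict commutator bounds, and $\sigma$ is unchanged up to conjugation. Connect $W$ to $1$ by a path of unitaries $W_t$ in $U_{2n}(\C)$ after stabilizing $W\oplus W^*$ or $W\oplus 1$. Each $W_t$ also commutes with $\pi(a)$, and $\det$ issues are avoided by working in $U_n(\C)$, which is connected. The path $W_tpW_t^*$ then stays in $\cP_{\infty,\varepsilon}(F)$, resp. $\cU_{\infty,\varepsilon}(F)\subset\cU_{\infty,2\varepsilon}(F)$, since the strict inequality $<\varepsilon$ is preserved exactly. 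Thus any cycle is equivalent to its conjugate by a permutation. This gives associativity up to the canonical identification, commutativity ($x\oplus y\sim y\oplus x$), and the compatibility of stabilization above. This is the step I expect to require care: the paths must be norm continuous within the controlled sets, which holds because scalar unitaries commute with $\pi(F)$ on the nose.

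\emph{Identities and inverses.} For $K^0$, $(p,q)\oplus(0,0)$ is $(p\oplus 0,q\oplus 0)$, which by the defining relation with $p'=0$ is equivalent to $(p,q)$; here $0\in M_1$ is allowed. Any $(p',p')$ is equivalent to $(0,0)$, since $(0,0)\sim(0\oplus p',0\oplus p')$ and permutation invariance applies. The inverse of $[p,q]$ is $[q,p]$. Indeed, $(p\oplus q,q\oplus p)$ is connected to $(p\oplus q,p\oplus q)$ via the rotation path $R_t(q\oplus p)R_t^*$, where $R_t$ is the real rotation in $U_2(\C)\otimes 1_n$. This path stays in $\cP_{\infty,\varepsilon}(F)$, because $\|[R_txR_t^*,a]\|=\|[x,a]\|$, and $\sigma$-classes are preserved. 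For $K^1$, $u\oplus 1_l\sim u$ by definition. The inverse of $[u]$ is $[u^*]$, which lies in $\cU_{\infty,\varepsilon}(F)$ by the symmetric condition. The standard rotation homotopy $(u\oplus 1)R_t(1\oplus u^*)R_t^*$ connects $u\oplus u^*$ to $1_{2n}$. Its commutator with $a$ is bounded, via Leibniz, by $\|[u,a]\|+\|[u^*,a]\|<2\varepsilon$, and similarly for its adjoint. This is precisely why paths in Definition~\ref{def_K_hom_K1} are allowed in $\cU_{\infty,2\varepsilon}(F)$. It gives $[u]+[u^*]=[1]$, completing the group structure.
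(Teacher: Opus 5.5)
Your proof is correct and follows essentially the same route as the paper: both rest on the fact that scalar unitaries commute exactly with $\pi(F)$ (you use permutation matrices joined to $1$ in the connected group $U_n(\C)$, the paper uses the rotations $R(t)$ after padding with zeros), together with the rotation homotopies $(p\oplus q, R_t(q\oplus p)R_t^*)$ and $(u\oplus 1)R_t(1\oplus u^*)R_t^*$, the latter at level $2\varepsilon$. You give more detail on well-definedness than the paper, which simply calls it clear; your remark about first stabilizing to $W\oplus W^*$ or $W\oplus 1$ is unnecessary, since $U_n(\C)$ is already connected.
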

\begin{proof}
     It is clear that $K_{\varepsilon}^0(F)$ is a well-defined monoid with identity $[0,0]$. To see that it is abelian, let
     \[ 
     R(t)= \MAT{\cos(t), \sin(t); -\sin(t),\cos(t)},\quad t\in \Big [0,\frac{\pi}{2}\Big].
     \]
     Then $(p_1\oplus p_2,q_1\oplus q_2)\sim(p_2\oplus p_1, q_2\oplus q_1)$ via the rotation homotopy $R(t)(p_1\oplus p_2)R(t)^*$, after first enlarging $p_1,p_2$ by direct sums with $0$ so that they have the same size. Note that $[R(t),\pi(a)]=0$ because $R(t)$ is a scalar matrix.
     
     Next, we claim that every element $[p,q]$ has an inverse, namely $[q,p]$. Indeed, the homotopy 
     $$
     \left(\MAT{p,0;0,q}, R(t)\MAT{q,0;0,p}R(t)^* \right),\quad t\in\Big[0,\frac{\pi}{2}\Big]
     $$
     connects $(p\oplus q, q\oplus p)$ to $(p\oplus q,p\oplus q)\sim(0,0)$.

     Finally, the statements for $K_{\varepsilon}^1(F)$ follow from a similar argument, using the homotopy 
     $$\MAT{u,0;0,1} R(t)\MAT{1,0;0,u^*} R(t)^*,\quad t\in \Big[0,\frac{\pi}{2}\Big],$$
     of unitaries that almost commute with each $a\in F$ at level $2\eps$. This shows that the identity element is $[1]$ and that the inverse of $[u]$ is $[u^*]$.
\end{proof}

Next, we introduce forget-control maps, which relate the groups for different control parameters in the most general setting. Recall that for $I$ a directed set, an inverse system of abelian groups indexed by $I$ is a collection $(G_i)_{i\in I }$ of abelian groups, along with a group homomorphism $\phi_{ij}\colon G_j\rightarrow G_i$ for each $j\geq i$ such that $\phi_{ii}=\textnormal{id}$ and $\phi_{ij}\circ\phi_{jk}=\phi_{ik}$ whenever $k\geq j\geq i$. Given an inverse system, its inverse limit is defined to be 
\[
\lim_{\leftarrow I}G_i:=\left\{(g_i)\in \prod_{i\in I }G_i\mid \phi_{ij}(g_j)=g_i, \forall \ j\geq i\right\}.
\]
An element $(g_i)$ of this inverse limit is called a \textit{coherent family}. 

\begin{define}\label{def_forget_contrl_map}
    Let $\calX$ be the collection of all pairs $(F,\varepsilon)$, where $F$ is a subset of the unit ball of $C^*_u(X)$ and $\varepsilon>0$. Define a partial order on $\calX$ by $(F,\varepsilon)\leq(G,\delta) $ if:
    \begin{itemize}
        \item[(1)] $\varepsilon\geq \delta$;
        \item[(2)] for any $a\in F$ there exists $b\in G$ with $\|a-b\|\le \frac{\varepsilon-\delta}{2}$.
    \end{itemize} 
    One verifies that
    \[
    \cP_{n,\delta}(G) \subseteq \cP_{n,\varepsilon}(F) \text{ if }(F,\varepsilon)\leq(G,\delta),
    \]
    and hence there is a \emph{forget-control} map between the groups associated to the two pairs of control parameters,
    \[ 
    \phi^{(G,\delta)(F,\varepsilon) }\colon\  K^*_{\delta}(G)\rightarrow K^*_{\varepsilon}(F).
    \] 
    
    Equipped with these forget-control maps, the collection of pairs $(F,\eps)$ in $\calX$ with $F\subset \C_u^*[X]_r$ forms a directed subset $\calX_r$, and the groups indexed by it form an inverse system. When $F=G$, we simply write $\phi^{\delta,\eps}$. 
\end{define}
 
\begin{define}\label{def_quant_K_hom}
    Denote the inverse limit of the system $\calX_r$ by
    \begin{align*}
    QK^0_r(X)&:=\lim_{\eps\to 0}K^0_{\eps}(E_r)=\lim_{\leftarrow (F,\eps)\in \calX_r} K^0_{\varepsilon}(F),\\
    QK^1_r(X)&:=\lim_{\eps\to 0}K^1_{\eps}(E_r)=\lim_{\leftarrow(F,\eps)\in \calX_r} K^1_{\varepsilon}(F).
    \end{align*}
    Since $E_r\subseteq E_s$ if $r<s$, we also have the inverse limit 
    \[ QK^*(X):=\underset{r\to \infty}{\lim_{\longleftarrow}} QK^*_r(X).\]
    We refer to the groups $QK_r^*$ and $QK^*$ as the \textbf{quantitative $K$-homology} of the uniform Roe algebra $C^*_u(X)$.
\end{define}

\begin{remark}\label{rem_three_levels}
  The theory can thus be viewed at three levels. The local group $K^*_\eps(F)$ has a fixed test set $F$ and a fixed commutator tolerance; $QK^*_r(X)$ is the inverse limit as the tolerance tends to zero with the propagation bound $r$ fixed; and $QK^*(X)$ is the further inverse limit as $r\to\infty$. Our main theorem (Theorem \ref{thm_main_theorem}) is formulated at the first level, for $F=E_r$: it consists of surjectivity and injectivity statements with specified changes of control, which is the appropriate notion of isomorphism for the groups $K^*_\eps(E_r)$ regarded as an inverse system. 
\end{remark}

\begin{remark}
  When $F=\emptyset$ or $\eps>2$, the definition reduces to ordinary $K$-theory, namely $K_{\eps}^*(F)\cong K_*(\cK(H))$, which is $\Z$ in degree $0$ and $0$ in degree $1$. The case $r=0$ will be discussed in Example \ref{exam_2_pts_quant_K}. 
\end{remark}

\subsection{Homotopy paths from norm closeness}
In this subsection, we use almost projections and almost unitaries to produce homotopies, at the cost of increasing $\eps$. The standard result for dealing with almost projections and almost unitaries can also be found in \cite[Remark 1.4]{OyonoOyono2011}. 
\begin{prop}\label{prop_almost_proj_unitary}
    Let $A$ be a unital $C^*$-algebra, and let $\varepsilon<\frac{1}{4}$. 
    \begin{propparts}
        \item Suppose $p\in A$ satisfies $p=p^*$, $\|p^2-p\|<\varepsilon$, and let $f$ be a continuous function on $\R$ such that 
    \[ 
    f(t)= 0 \text{ if } t\le \frac{1-\sqrt{1-4\eps}}{2};\quad f(t)=1 \text{ if } t\ge \frac{1+\sqrt{1-4\eps}}{2}.
    \]
    Then $f(p)$ is a projection with $\|p-f(p)\|<2\eps$, and it does not depend on the choice of $f$.
    \item If $v$ is invertible in $A$ and satisfies $\|I-v^*v\|<\eps$, then the polar part\footnote{The definition of almost unitary used here is slightly different from, but equivalent to, the usual one, because $I-vv^*$ and $I-v^*v$ are both self-adjoint and their spectra can differ only at the point $1$. }
    $$
    u:=v(v^*v)^{-\frac{1}{2}}
    $$
    is a unitary such that $\|u-v\|<2\eps$.
    \end{propparts} 
\end{prop}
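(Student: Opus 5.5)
Both parts go through the continuous functional calculus, with the spectral bounds read off from the hypotheses via the spectral mapping theorem.

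For part (a), set $\delta=\frac{1-\sqrt{1-4\eps}}{2}$, so that $\delta$ and $1-\delta$ are the two roots of $t^2-t+\eps=0$. Since $p$ is self-adjoint, $\|p^2-p\|=\sup_{t\in\spec(p)}|t^2-t|<\eps$. The inequality $|t^2-t|<\eps$ forces $t^2-t+\eps>0$, hence $t<\delta$ or $t>1-\delta$. It also forces $t^2-t-\eps<0$, which keeps $t$ within $\eps$-control of $0$ or $1$. Because $\eps<1/4$, the gap $[\delta,1-\delta]$ is nondegenerate, so $\spec(p)\subset(-\infty,\delta)\cup(1-\delta,\infty)$. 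Any $f$ as in the statement equals $\chi_{(1-\delta,\infty)}$ on $\spec(p)$. Consequently $f(p)$ is a projection: $f^2=f$ and $\bar f=f$ on the spectrum. Since $f(p)$ depends only on $f|_{\spec(p)}$, it is independent of the choice of $f$.

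For the norm estimate we have $\|p-f(p)\|=\sup_{t\in\spec(p)}|t-f(t)|$. If $t$ is near $0$, then $|t|\cdot|t-1|<\eps$ and $|t-1|>1-\delta>1/2$, so $|t|<2\eps$. Symmetrically, near $1$ we get $|t-1|<\eps/|t|<2\eps$ because $|t|>1/2$. The spectrum is compact, so the supremum is attained and the inequality stays strict, giving $\|p-f(p)\|<2\eps$.

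For part (b), $v^*v$ is positive and invertible, with $\spec(v^*v)\subset(1-\eps,1+\eps)$ because $\|I-v^*v\|<\eps$. Hence $(v^*v)^{-1/2}$ exists. A direct computation gives $u^*u=(v^*v)^{-1/2}v^*v(v^*v)^{-1/2}=I$. Since $v$ is invertible, $u$ is invertible, and an invertible isometry is unitary. For the estimate, write $u-v=u\bigl(I-(v^*v)^{1/2}\bigr)$, so $\|u-v\|=\|I-(v^*v)^{1/2}\|=\sup_{s\in\spec(v^*v)}|1-\sqrt s|$. Using $|1-\sqrt s|=|1-s|/(1+\sqrt s)\le|1-s|<\eps$, we obtain $\|u-v\|<\eps<2\eps$.

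The only delicate point is the spectral gap argument in (a). There one must check that the two thresholds in the definition of $f$ are exactly the roots bounding the excluded interval, so that $f$ takes only the values $0$ and $1$ on $\spec(p)$. This is where the hypothesis $\eps<1/4$ is used. Everything else is routine functional calculus.
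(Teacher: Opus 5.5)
Your proof is correct and follows essentially the same route as the paper: you confine $\spec(p)$ to the complement of $[\delta,1-\delta]$ and $\spec(v^*v)$ to $(1-\eps,1+\eps)$, then read off both claims by continuous functional calculus. The one small difference is in (b), where you factor $u-v=u\bigl(I-(v^*v)^{1/2}\bigr)$ rather than the paper's $v\bigl((v^*v)^{-1/2}-I\bigr)$. This gives an exact norm identity and the sharper bound $\|u-v\|<\eps$.
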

\begin{proof}
    Recall that for a self-adjoint element $a\in A$ and a continuous function $f\colon \R \to \R$, the continuous functional calculus $f(a)$ satisfies 
    \[ \|f(a)\|=\sup_{\lambda\in \spec(a)}|f(\lambda)|.   \]
    The condition $\|p^2-p\|<\eps$ implies that the spectrum of $p$ is contained in $\big[\frac{1-\sqrt{1+4\eps}}{2},\frac{1-\sqrt{1-4\eps}}{2}\big]\cup\big[\frac{1+\sqrt{1-4\eps}}{2},\frac{1+\sqrt{1+4\eps}}{2}\big]$. Hence the spectrum of $f(p)$ consists of $0,1$ only, and any two choices of $f$ give the same element, since a self-adjoint element with spectrum $\{0\}$ is zero. Moreover,  
    \[ 
    \|p-f(p)\|=\sup_{\lambda\in \spec(p)}|\lambda-f(\lambda)|\le\frac{1-\sqrt{1-4\eps}}{2}<2\eps.
    \]
    
    It is not hard to check that $u$ is a unitary, and that $\|I-v^*v\|<\eps$ implies $\|v\|\in\big[\sqrt{1-\eps},\sqrt{1+\eps}\big]$. Then 
    \begin{align*}
    \|u-v\|\le\|v\|\cdot \Big\|1-\big(1-(1-v^*v)\big)^{-\frac{1}{2}}\Big\|<2\eps.&\qedhere
    \end{align*}
\end{proof}

The above observations imply that projections or unitaries which are close in norm are homotopic.
\begin{cor}\label{cor_norm_closed_proj_uni}
    Let $\eps<1/4$. 
    \begin{propparts}
        \item If a projection pair $(p,q)\in \cP_{n,\eps}(F)$ satisfies $\|p-q\|<\eps$, then $(p,e_l),(q,e_l)$ are well-defined elements of $\cP_{n,2\eps}(F)$ and lie in the same path component. 
        \item If unitaries $u,v\in \cU_{n,\eps}(F)$ satisfy $\|u-v\|<\eps$, then $u,v$ lie in the same path component of $\cU_{n,2\eps}(F)$.
    \end{propparts}
\end{cor}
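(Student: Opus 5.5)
The plan is to join the two given elements by a straight-line path, correct each point of the path to a projection (resp. unitary) by functional calculus, and bound how much the correction can worsen the commutators. The scalar matrix $e_l$ commutes exactly with every $\pi(a)$, so only the first entry of the pairs has to be controlled.

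\emph{Part (a).} Let $l$ be the rank of $\sigma(p)\in M_n(\C)$. By condition (2), or because $\|\sigma(p)-\sigma(q)\|\le\|p-q\|<1$, this is also the rank of $\sigma(q)$. Hence $(p,e_l)$ and $(q,e_l)$ lie in $\cP_{n,\eps}(F)\subseteq\cP_{n,2\eps}(F)$.
\begin{enumerate}
\item Put $p_t=(1-t)p+tq$ for $t\in[0,1]$. Each $p_t$ is self-adjoint, and a direct expansion gives
\[
p_t-p_t^2=t(1-t)(p-q)^2 ,
\]
so $\|p_t^2-p_t\|\le \eps^2/4$.
\item Apply Proposition \ref{prop_almost_proj_unitary}(a) at the level $\eps':=\eps^2/4<1/4$ with one fixed continuous $f$ that is admissible for $\eps'$. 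This gives projections $f(p_t)\in M_n(\cK(H)^+)$ with $\|f(p_t)-p_t\|<2\eps'=\eps^2/2$.
\item The path $t\mapsto f(p_t)$ is norm continuous, because functional calculus by a fixed continuous function is continuous on self-adjoint elements. At the endpoints $f(p)=p$ and $f(q)=q$.
\item For $a\in F$ we have $\|a\|\le 1$. Convexity gives $\|[p_t,a]\|<\eps$, so
\[
\|[f(p_t),a]\|\le \|[p_t,a]\|+2\|f(p_t)-p_t\|<\eps+\eps^2<2\eps .
\]
\item The relation $\sigma(f(p_t))=f(\sigma(p_t))$ is a continuous path of projections in $M_n(\C)$. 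Its rank is therefore constantly $l$, so condition (2) holds along the whole path $(f(p_t),e_l)$ in $\cP_{n,2\eps}(F)$.
\end{enumerate}

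\emph{Part (b).} Put $v_t=(1-t)u+tv=u(1+tw)$ with $w=u^*v-1$ and $\|w\|<\eps$. Each $v_t$ is invertible in $M_n(\cK(H)^+)$, so its polar part $u_t=v_t(v_t^*v_t)^{-1/2}$ is a unitary in the same algebra, and it depends continuously on $t$. Again $u_0=u$ and $u_1=v$.

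The key estimate uses the standard fact that for an invertible $x$ the polar part is a nearest unitary, i.e.
\[
\|x-x|x|^{-1}\|=\||x|-1\|=\operatorname{dist}(x,\text{unitaries}).
\]
It is quickest to see this through the singular values of $x$. Comparing $v_t$ with the two unitaries $u$ and $v$ then gives
\[
\|u_t-v_t\|\le\min\bigl(t\|u-v\|,(1-t)\|u-v\|\bigr)\le \eps/2 .
\]
Convexity gives $\|[v_t,a]\|<\eps$ and $\|[v_t^*,a]\|<\eps$. Therefore $\|[u_t,a]\|<\eps+2\cdot\eps/2=2\eps$, and the same bound holds for $u_t^*$. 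So the path stays in $\cU_{n,2\eps}(F)$.

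\emph{Main obstacle.} The delicate point is part (b). If one applies Proposition \ref{prop_almost_proj_unitary}(b) directly, one only gets $\|1-v_t^*v_t\|\lesssim 2\eps$, which can exceed $1/4$, and the resulting commutator bound is about $3\eps$. Both problems disappear with the nearest-unitary estimate, which exploits that each $v_t$ is close to an endpoint; I would check this inequality carefully. In part (a) the analogous issue does not arise, because the quadratic smallness $\eps^2/4$ of $p_t-p_t^2$ leaves plenty of room.
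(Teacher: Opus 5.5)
Your proof is correct. Part (a) is the paper's argument: the same linear path, the same identity $p_t-p_t^2=t(1-t)(p-q)^2$, rounding by a fixed $f$, and the bound $\eps+\eps^2<2\eps$. You also make explicit the continuity in $t$ and the constancy of the rank of $\sigma(f(p_t))$, which the paper leaves implicit. Note that $\|p_t^2-p_t\|<\eps^2/4$ holds strictly, as the proposition requires, since $\|p-q\|<\eps$.

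In part (b) you take a different route, and your stated reason for doing so is mistaken. The paper does apply the unitary part of Proposition \ref{prop_almost_proj_unitary} directly, because for unitaries $u,v$ there is the exact identity
\[
1-v_t^*v_t=t(1-t)(u-v)^*(u-v).
\]
In your notation, unitarity of $1+w=u^*v$ gives $w+w^*=-w^*w$, hence $1-(1+tw)^*(1+tw)=t(1-t)w^*w$. So $\|1-v_t^*v_t\|<\eps^2/4$, the polar correction is $<\eps^2/2$, and the commutator bound is $\eps+\eps^2<2\eps$, exactly as in (a). The crude bound of order $2\eps$ you were worried about appears only if this cancellation is discarded.

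Your nearest-unitary argument is nonetheless valid. It gives $\|u_t-v_t\|\le\min(t,1-t)\|u-v\|<\eps/2$ using only that the endpoints are unitary. It is also a more general principle: any polar correction is bounded by the distance to an arbitrary known unitary. However, it is an extra lemma not available in the paper, and it should be justified in a general unital $C^*$-algebra rather than via singular values. For example, both ends of $[\|x^{-1}\|^{-1},\|x\|]$ lie in $\spec|x|$, so $\||x|-1\|=\max(\|x\|-1,\,1-\|x^{-1}\|^{-1})$. On the other hand, every unitary $w$ satisfies $\|x-w\|\ge\|x\|-1$, and, testing on unit vectors in a faithful representation, $\|x-w\|\ge 1-\|x^{-1}\|^{-1}$.

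In short, the paper's route is more economical and gives a quadratically small correction from a result already proved. Yours gives only a linear correction, which happens to be exactly enough for the level $2\eps$.
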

\begin{proof}
    Note that, by definition, $\sigma(p)$ and $\sigma(q)$ have the same rank, so we may take $[\sigma(p)]=[e_l]=[\sigma(q)]$. Define $p_t=tp+(1-t)q$; this is a path of $\eps^2/4$-almost projections, since $p$ and $q$ are projections and therefore
    \[ 
    \|p_t^2-p_t\|=(t-t^2)\|(p-q)^2\|\le\tfrac14\|p-q\|^2<\eps^2/4.   
    \]
    Moreover, $\|[f(p_t),a]\|\le \|[p_t-f(p_t),a]\|+\|[p_t,a]\|< 2\eps$. The path $f(p_t)$ is the desired one. 

    For unitaries, define $u_t=tu+(1-t)v$. This is a path of $\eps^2/4$-almost unitaries: the identity $u_t-u=(1-t)(v-u)$ implies that $u_t$ is invertible, and since $u$ and $v$ are unitaries, 
    \[ 
    \|I-u_t^*u_t\|=(t-t^2)\|(u-v)^*(u-v)\|\le\tfrac14\|u-v\|^2<\eps^2/4. 
    \]
    The path $u_t(u_t^*u_t)^{-\frac{1}{2}}$ is the desired one.
\end{proof}

\subsection{Normal forms}
Next, we state a lemma that allows us to prescribe the scalar part of cycles for $K^*_{\eps}(F)$. 
\begin{lemma}\label{lem_precribe_scalar_P1_U1}
    With notation as in Definitions \ref{def_K_hom_K0} and \ref{def_K_hom_K1}, define
    $$
    \cP^1_{n,\varepsilon}(F):=\Set{(p,q)\in \cP_{n,\varepsilon}(F)\mid \exists l\in \N, \,\sigma(p)=\sigma(q)=e_l},
    $$
    
    $$
    \cU^1_{n,\varepsilon}(F):=\Set{u\in \cU_{n,\varepsilon}(F)\mid u-1_n\in M_n(\cK(H))}.
    $$
    Then: 
    \begin{propparts}
        \item every element $(p,q)\in \cP_{n,\varepsilon}(F)$ lies in the same path component as some element of $\cP^1_{n,\varepsilon}(F)$;
        \item if two elements $(p_1,q_1)$ and $(p_2,q_2)$ of $\cP_{n,\varepsilon}^1(F)$ are connected by a path in $\cP_{n,\varepsilon}(F)$, then they are connected by a path in $\cP_{n,\varepsilon}^1(F)$; 
        \item every element $v\in \cU_{n,\varepsilon}(F)$ lies in the same path component as some element of $\cU^1_{n,\varepsilon}(F)$, and if $v_0,v_1\in\cU^1_{n,\varepsilon}(F)$ lie in the same path component of $\cU_{n,\varepsilon}(F)$, then they lie in the same path component of $\cU^1_{n,\varepsilon}(F)$.
    \end{propparts}
\end{lemma}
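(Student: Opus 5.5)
The key observation is that scalar matrices in $M_n(\C)\subset M_n(\cK(H)^+)$ commute exactly with $\pi(a)=a\otimes 1_n$ for every $a\in F$. As a result, conjugating by a path of scalar unitaries never changes the commutator norms $\|[p,a]\|$, $\|[u,a]\|$. The plan is therefore to adjust the scalar part using unitaries in $M_n(\C)$ and paths in $U_n(\C)$.

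For (a), take $(p,q)\in\cP_{n,\eps}(F)$. The matrices $\sigma(p)$ and $\sigma(q)$ are projections in $M_n(\C)$ of the same rank $l$, by condition (2) of Definition \ref{def_K_hom_K0}. Choose unitaries $w_p,w_q\in U_n(\C)$ with $w_p\sigma(p)w_p^*=e_l=w_q\sigma(q)w_q^*$. Since $U_n(\C)$ is path connected, pick paths $w_p(t),w_q(t)$ from $1_n$ to $w_p,w_q$. Then $(w_p(t)pw_p(t)^*,\,w_q(t)qw_q(t)^*)$ is a path of projection pairs. Along it the commutators with $a\otimes1_n$ are unitarily conjugated, hence keep the same norm, and $\sigma$ of each entry stays at rank $l$. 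So the path lies in $\cP_{n,\eps}(F)$ and ends in $\cP^1_{n,\eps}(F)$.

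For (b), let $(p_t,q_t)$, $t\in[0,1]$, be a path in $\cP_{n,\eps}(F)$ joining two elements of $\cP^1_{n,\eps}(F)$. The ranks of $\sigma(p_t)$ and $\sigma(q_t)$ are constant in $t$ and both equal $l$. The main obstacle is to choose the conjugating unitaries continuously in $t$. I would handle this as follows:
\begin{itemize}
\item Each of $t\mapsto\sigma(p_t)$ and $t\mapsto\sigma(q_t)$ is a path in the Grassmannian $\mathrm{Gr}_l(\C^n)$.
\item The map $U_n(\C)\to\mathrm{Gr}_l(\C^n)$, $w\mapsto w^*e_lw$, is a fibre bundle, so paths lift. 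Concretely, one can use the standard local trick: for $\|e-e'\|<1$ the element $z=ee'+(1-e)(1-e')$ is invertible and its polar part conjugates $e'$ to $e$, continuously in $e'$. Gluing over a fine partition of $[0,1]$ gives the lift.
\item This produces continuous $w_t,v_t\in U_n(\C)$ with $w_t\sigma(p_t)w_t^*=e_l$, $v_t\sigma(q_t)v_t^*=e_l$, and $w_0=v_0=1_n$.
\end{itemize}
Then $(w_tp_tw_t^*,\,v_tq_tv_t^*)$ lies in $\cP^1_{n,\eps}(F)$ and runs from $(p_0,q_0)$ to $(w_1p_1w_1^*,\,v_1q_1v_1^*)$.

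It remains to return to $(p_1,q_1)$. The unitaries $w_1$ and $v_1$ commute with $e_l$, so they lie in the connected group $U(e_l\C^n)\times U((1-e_l)\C^n)$. Paths from $w_1$ and from $v_1$ to $1_n$ inside this group keep $\sigma$ of each entry equal to $e_l$. Hence they give a path in $\cP^1_{n,\eps}(F)$ from $(w_1p_1w_1^*,\,v_1q_1v_1^*)$ back to $(p_1,q_1)$.

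For (c), take $v\in\cU_{n,\eps}(F)$. Then $\sigma(v)\in U_n(\C)$, and we choose a path $z_t$ in $U_n(\C)$ from $1_n$ to $\sigma(v)^*$. The path $z_tv$ stays in $\cU_{n,\eps}(F)$, because $\|[z_tv,a]\|=\|z_t[v,a]\|$ and $\|[(z_tv)^*,a]\|=\|[v^*,a]z_t^*\|$, both unchanged. Its endpoint $\sigma(v)^*v$ has scalar part $1_n$, so it lies in $\cU^1_{n,\eps}(F)$.

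For the second assertion of (c), let $v_t$ be a path between $v_0,v_1\in\cU^1_{n,\eps}(F)$. Then $v_t':=\sigma(v_t)^*v_t$ is continuous, lies in $\cU^1_{n,\eps}(F)$ by the same computation, and has $v'_0=v_0$ and $v'_1=v_1$, since $\sigma(v_0)=\sigma(v_1)=1_n$. No lifting issue arises here, because $\sigma(v_t)^*$ is already a continuous choice. I also note that for $K^1$ the relation uses paths at level $2\eps$, but the argument is level-independent, so it applies verbatim there.
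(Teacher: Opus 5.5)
Your proposal is correct and follows essentially the same route as the paper: scalar unitaries commute with $\pi(F)$, so conjugating or multiplying by them preserves the commutator bounds; in (b) you lift the scalar path to conjugating unitaries and then return along a path in the commutant of $e_l$; in (c) you use $\sigma(v_t)^*v_t$. The only difference is that you justify the continuous choice of conjugating unitaries in (b) directly, via the local polar-part construction glued over a fine partition, whereas the paper quotes Lemma 4.7 of Willett--Yu for this step.
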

\begin{proof}
    For part (1), let $\sigma\colon M_n(\cK(H)^+)\to M_n(\C)$ be the canonical quotient map. Since $[\sigma(p)]=[\sigma(q)]\in K_0(\C)$ by definition, we may find a path of unitaries $(u_t)\in C([0,1],M_n(\C))$ such that $u_t \sigma(p)u_t^*$ connects $\sigma(p)$ to some $e_l$. Similarly, choose a path $(v_t)$ for $\sigma(q)$. Since scalar matrices commute with $F$, the path $(u_t pu_t^*,v_t qv_t^*)$ lies in $\cP_{n,\varepsilon}(F)$ and connects $(p,q)$ to an element of $\cP^1_{n,\varepsilon}(F)$.

    For part (2), suppose that $(p_0,q_0)$ and $(p_1,q_1)$ are elements of $\mathcal{P}^1_{n,\eps}(F)$ that are connected by a path $(p_t,q_t)_{t\in [0,1]}$ in $\mathcal{P}_{n,\eps}(F)$.  In particular, there exists $l\in \N$ such that $\sigma(p_0)=\sigma(q_0)=e_l=\sigma(p_1)=\sigma(q_1)$.
    By \cite[Lemma 4.7]{WYUCT_control_kk2}, there are paths $(u_t)_{t\in [0,1]}$ and $(v_t)_{t\in [0,1]}$ of unitaries in $M_n(\C)$ such that $\sigma(p_t)=u_t\sigma(p_0)u_t^*$ and $\sigma(q_t)=v_t\sigma(q_0)v_t^*$ for all $t\in [0,1]$.  The path $((u_t^*p_tu_t,v_t^*q_tv_t))_{t\in [0,1]}$ then lies in $\mathcal{P}^1_{n,\eps}(F)$ and connects $(p_0,q_0)$ to $(u_1^*p_1u_1,v_1^*q_1v_1)$.
    
    We claim that there is a path in $\mathcal{P}^1_{n,\eps}(F)$, parametrized by $[1,2]$, connecting $(u_1^*p_1u_1,v_1^*q_1v_1)$ and $(p_1,q_1)$.  Concatenating this path with the one above gives a path in $\mathcal{P}^1_{n,\eps}(F)$ connecting $(p_0,q_0)$ and $(p_1,q_1)$, which completes the proof.
    
    To establish the claim, note that $u_1$ commutes with $e_l$, and is therefore connected to the identity in $M_n(\C)$ via a path of unitaries that all commute with $e_l$, obtained by exponentiation; call this path $(u_t)_{t\in [1,2]}$.  Similarly, we get a path $(v_t)_{t\in [1,2]}$ with the same properties with respect to $v_1$.  The path $((u_t^*p_1u_t,v_t^*q_1v_t))_{t\in [1,2]}$ then lies in $\mathcal{P}^1_{n,\eps}(F)$ and connects $(u_1^*p_1u_1,v_1^*q_1v_1)$ to $(p_1,q_1)$, so we are done.

    For part (3), let $w=\sigma(v^*)$. Since the unitary group $U_n(\C)$ is connected, there is a path $(w_t)_{t\in [0,1]}$ of unitaries connecting the identity to $w$. Then the path $(w_tv)$ connects $v$ to $u=w_1 v$, which satisfies $\sigma(u)=1$, so that $1-u\in M_n(\cK(H))$. If $v_t$ is a path connecting $v_0$ and $v_1$, set $w_t=\sigma(v_t^*)$. Since $w_0=w_1=1$, the path $(w_tv_t)$ is the required one.
\end{proof}

Consequently, classes can be represented in a normalized form, as in ordinary $K$-theory. For the rest of the paper, we assume that $F$ is $*$-closed. 

\begin{lemma}\label{lem_element_to_[p,e_n]}
    Every element of $K^0_{\varepsilon}(F)$ can be written as $[p,1_n\oplus 0_n]$ for some $n$, with $\sigma(p)=1_n\oplus 0_n$.
\end{lemma}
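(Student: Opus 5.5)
We represent a class $[p,q]$ in this normal form using only the relations in Definition \ref{def_K_hom_K0}: scalar conjugations, which commute with $\pi(F)$, and adding a common projection to both entries. Start with an arbitrary class $[p,q]$ with $(p,q)\in\cP_{m,\eps}(F)$. By Lemma \ref{lem_precribe_scalar_P1_U1}(1) it lies in the same path component as an element of $\cP^1_{m,\eps}(F)$, so we may assume $\sigma(p)=\sigma(q)=e_l$ for some $l\le m$.

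Next we make the second entry scalar by adding $1_m-q$ to both entries, which is the standard trick. The element $1_m-q\in M_m(\cK(H)^+)$ is a projection, and it satisfies $\|[1_m-q,a]\|=\|[q,a]\|<\eps$ for all $a\in F$. So $(p\oplus(1_m-q),\,q\oplus(1_m-q))$ is an allowed stabilization of $(p,q)$, and it lies in $\cP_{2m,\eps}(F)$. Conjugating by the rotation $R(t)$ from the proof of Lemma \ref{lem_local_Khom_group} gives a path of projections
\[
R(t)\bigl(q\oplus(1_m-q)\bigr)R(t)^*
\]
with unchanged commutator bounds, since $R(t)$ is a scalar matrix. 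However, this does not directly turn $q\oplus(1_m-q)$ into a scalar.

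We use instead the identity that $q\oplus(1_m-q)$ is unitarily equivalent to $1_m\oplus 0_m$ via the self-adjoint unitary
\[
W=\MAT{q,1-q;1-q,q}.
\]
The catch is that $W$ is not scalar, so we cannot conjugate by it. The cleaner route is to compare with $1_m\oplus 0_m$ by a path. The path
\[
q_t=\MAT{q,0;0,0}+R(t)\MAT{0,0;0,1-q}R(t)^*
\]
fails in general, since it is not a projection for intermediate $t$. Instead, use $\bigl(p\oplus(1_m-q)\oplus 0_m\oplus q\bigr)$ paired against $\bigl(q\oplus(1_m-q)\oplus 0_m\oplus q\bigr)$. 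This is still stabilization, since we added $0_m\oplus q$ to both entries. Then rotate blocks in the second entry by scalar permutation matrices, and use the classical homotopy
\[
R(t)\bigl(q\oplus 0\bigr)R(t)^*+\bigl((1-q)\oplus 0\bigr)
\]
to move between $q\oplus(1-q)$ and $1\oplus 0$.

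We expect this homotopy step to be the main obstacle. Projections of the form $q\oplus(1-q)$ are connected to $1\oplus 0$ only through non-scalar homotopies, and we must check that these keep commutators with $F$ below $\eps$. The first thing to try is the path
\[
x_t=\MAT{q+\cos^2 t\,(1-q),\ \cos t\sin t\,(1-q);\ \cos t\sin t\,(1-q),\ \sin^2 t\,(1-q)}.
\]
This is a projection for each $t$, since it equals $q\oplus 0$ plus the rotated $(1-q)\oplus 0$, and these two pieces are orthogonal. Each $x_t$ is built from $q$ with scalar coefficients, so
\[
\|[x_t,a]\|\le\|[q,a]\|\cdot\max_{\theta}\bigl\|\text{coefficient matrix}\bigr\|<\eps.
\]
The coefficient matrix is a projection of norm $1$, and $q$ enters linearly via the rank-one structure. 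We then verify the strict bound $\|[x_t,a]\|<\eps$ directly using $[x_t,a]=P_t([q,a]\oplus 0)P_t^{*}$-type expressions, where the second factor should be $[1-q,a]$. Concretely, $x_t=(q\oplus0)+V_t((1-q)\oplus0)V_t^*$ with $V_t$ scalar, and we compute
\[
[x_t,a]=\bigl([q,a]\oplus 0\bigr)-V_t\bigl([q,a]\oplus0\bigr)V_t^*.
\]
This only gives the bound $2\eps$ a priori. To get $<\eps$ we would exploit that $q$ and $1-q$ live on orthogonal ranges. If that fails, a slight enlargement of the tolerance must be absorbed, or we avoid the path altogether.

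Having done this, the second entry is scalar, equal to $1_{n}\oplus 0_{n}$ after a scalar permutation. Applying Lemma \ref{lem_precribe_scalar_P1_U1} once more adjusts $\sigma(p)$ to equal $1_n\oplus 0_n$, since $\sigma(p)$ has the same rank as $\sigma(q)=1_n\oplus0_n$. A scalar unitary path conjugating only $p$ does this, and we pad with zeros so that the sizes match.
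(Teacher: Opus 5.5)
There is a genuine gap, at the one step that carries content. Your overall route is exactly the paper's: stabilize by $1_m-q$, rotate $q\oplus(1_m-q)$ to $1_m\oplus 0_m$ through $q\oplus 0$ plus a rotated copy of $(1-q)\oplus 0$, then conjugate the first entry by a scalar unitary path. But you never establish that the rotation path stays in $\cP_{2m,\eps}(F)$. Your computation $[x_t,a]=([q,a]\oplus 0)-V_t([q,a]\oplus 0)V_t^*$ only yields $2\eps$, and you leave the strict bound as a hope, with enlarging the tolerance as a fallback. That fallback is not available. The equivalence relation on $K^0_\eps(F)$ uses only paths in $\cP_{\infty,\eps}(F)$, so a path at level $2\eps$ proves the identity only after the forget-control map to $K^0_{2\eps}(F)$, which is a weaker statement.

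The fix is to combine the scalar coefficients before taking norms, rather than applying the triangle inequality to two multiples of the same operator $[q,a]$. Write $P_t=V_t(1\oplus 0)V_t^*=\begin{pmatrix}\cos^2 t&\cos t\sin t\\ \cos t\sin t&\sin^2 t\end{pmatrix}$. Then $x_t=P_t\otimes 1+\big((1\oplus 0)-P_t\big)\otimes q$. Since $a$ acts diagonally, it commutes with $P_t\otimes 1$, and
\[
[x_t,a]=\begin{pmatrix}\sin^2 t&-\sin t\cos t\\ -\sin t\cos t&-\sin^2 t\end{pmatrix}\otimes[q,a].
\]
This scalar matrix is a difference of two projections, not a projection as you claim. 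Its square is $\sin^2 t\cdot 1_2$, so its eigenvalues are $\pm\sin t$, and hence $\|[x_t,a]\|\le|\sin t|\,\|[q,a]\|<\eps$. This is precisely the estimate in the paper's proof. The orthogonality of $q$ and $1-q$ is what makes $x_t$ a projection; it plays no role in the commutator bound.

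Two further corrections are needed.
\begin{itemize}
\item The path $(q\oplus 0)+R(t)(0\oplus(1-q))R(t)^*$ that you discard is a projection for every $t$. Every entry of the second summand is a scalar multiple of $1-q$, so that summand is orthogonal to $q\oplus 0$. This is the paper's path up to reparametrization.
\item The extra stabilization by $0_m\oplus q$ must be dropped. After rotating $q\oplus(1_m-q)$ to $1_m\oplus 0_m$, the second entry would still contain the non-scalar block $q$, so your concluding claim that the second entry is scalar would be false.
\end{itemize}
With these changes the argument is complete with $n=m$: stabilize once by $1_m-q$, use $x_t$ with the estimate above, and then rotate $\sigma(p\oplus(1_m-q))$, which has rank $m$, to $1_m\oplus 0_m$ by a scalar unitary path acting on the first entry only.
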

\begin{proof}
    Given a class $[p,q]$ in $K^0_{\varepsilon}(F)$, we may assume that $(p,q)\in \cP^1_{n,\eps}(F)$. Note that 
    $$[p,q]=[p\oplus (1_n-q),q\oplus (1_n-q)]=[p\oplus (1_n-q),1_n\oplus 0_n]$$
    via the rotational path of projections almost commuting with $F$ at level $\varepsilon$,
    \[
    p_t=q\oplus 0_n+R(t)\big((1-q)\oplus 0_n\big)R(t)^*,\quad [p_t,a]=\MAT{\sin^2 t, \sin t\cos t;\cos t \sin t, -\sin^2t }[q,a].
    \]
     A further rotational homotopy on the first component modifies $\sigma(p\oplus (1_n-q))$ to $1_n\oplus 0_n$.
\end{proof}

Finally, we give a criterion for a class to be zero, which will be used in the discussion of injectivity. The proof is the same as that of \cite[Lemma 3.3]{WYUCT_control_kk2}. 

\begin{lemma}\label{lem_K0_element_to_be_zero}
With notation as in Definition \ref{def_K_hom_K0}, let $(p,q)\in \mathcal{P}_{n,\eps}(F)$, and assume that $[p,q]=0$ in $K^0_{\eps}(F)$.   Then there exist $m\in \N$ and an element $(s,s)$ of $\mathcal{P}_{n+2m,\eps}(F)$ such that $(p\oplus 1_m\oplus 0_m,q\oplus 1_m\oplus 0_m)$ lies in the same path component of $\mathcal{P}_{n+2m, \eps}(F)$ as $(s,s)$.
\end{lemma}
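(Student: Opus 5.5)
The idea is to unwind the equivalence relation $\sim$ on $\cP_{\infty,\eps}(F)$ into a finite chain of elementary moves, and then absorb every stabilization into one common stabilizing summand. I would define a relation $\approx$ on $\cP_{\infty,\eps}(F)$ by declaring $(p,q)\approx(p',q')$, for $(p,q)\in\cP_{n,\eps}(F)$ and $(p',q')\in\cP_{n',\eps}(F)$, if there exist $k,k'$ with $n+k=n'+k'$ and elements $(s,s)\in\cP_{k,\eps}(F)$, $(s',s')\in\cP_{k',\eps}(F)$ such that $(p\oplus s,q\oplus s)$ and $(p'\oplus s',q'\oplus s')$ lie in the same path component of $\cP_{n+k,\eps}(F)$. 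One checks three things. First, $\approx$ contains the generating relations: a path is the case $k=k'=0$, and $(p,q)\approx(p\oplus p',q\oplus p')$ is witnessed by $s=p'$, $s'=0$ after padding. Second, $\approx$ is reflexive and symmetric. Third, and this is the key step, $\approx$ is transitive. Given these, $\sim\subseteq\approx$.

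For transitivity, suppose $(p,q)\oplus(s_1,s_1)\simeq(p',q')\oplus(s_1',s_1')$ and $(p',q')\oplus(s_2,s_2)\simeq(p'',q'')\oplus(s_2',s_2')$, where $\simeq$ means path-connected in $\cP_{\cdot,\eps}(F)$. I would take direct sums of the first path with $(s_2,s_2)$ and of the second with $(s_1',s_1')$. Direct sums of paths stay in $\cP_{\cdot,\eps}(F)$, because commutators with $\pi(a)=a\otimes 1$ act blockwise, so the norm is the maximum over the blocks. The remaining problem is that the summands appear in different orders. I would fix this with the rotation homotopies $R(t)$ used in Lemma~\ref{lem_local_Khom_group}, which permute blocks and keep the commutator estimate at level $\eps$ because $R(t)$ is scalar. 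The sum of two elements of the form $(s,s)$ is again of that form, so transitivity follows.

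Now apply this to $[p,q]=0=[0,0]$. We get $(p\oplus s,q\oplus s)\simeq(s',s')$ in some $\cP_{n+k,\eps}(F)$. The target statement requires the stabilizer to be exactly $1_m\oplus 0_m$, so the remaining step is to convert $(s,s)$ into that form. Using Lemma~\ref{lem_precribe_scalar_P1_U1}(1), I would first normalize so that $\sigma(s)=e_l$. Then I would add $(1_k-s,1_k-s)\oplus(0,0)$ to both sides; this is allowed because $1-s$ satisfies the same commutator bounds. The rotation path
\[
s\oplus 0+R(t)\bigl((1-s)\oplus 0\bigr)R(t)^*
\]
from Lemma~\ref{lem_element_to_[p,e_n]}, applied simultaneously to both entries, connects $(s\oplus(1-s),s\oplus(1-s))$ to $(1_k\oplus 0_k,1_k\oplus 0_k)$ while staying at level $\eps$. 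The right-hand side remains of the form $(s'',s'')$. After padding and further rotations, this gives $m$ and the required element $(s,s)\in\cP_{n+2m,\eps}(F)$.

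I expect the main obstacle to be the bookkeeping in the transitivity step and in the final conversion. Each rotation and direct-sum operation must keep both coordinates in $\cP_{\cdot,\eps}(F)$ at the same $\eps$ with strict inequality, and condition~(2) on $\sigma$ must be preserved throughout. I expect both to hold, since all the unitaries involved are scalar.
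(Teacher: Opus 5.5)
Your proposal is correct and follows the paper's proof. Both unwind $[p,q]=0$ into a path from $(p\oplus s,q\oplus s)$ to some $(s',s')$, add $(1-s,1-s)$ to both ends, and rotate $s\oplus(1-s)$ to $1_k\oplus 0_k$ by the scalar rotation to get $m=k$. Your transitivity argument spells out the unwinding step that the paper takes for granted here (and sketches later in the proof of Proposition~\ref{prop_pairing_well_defined}); the normalization $\sigma(s)=e_l$ is unnecessary, since the rotation works for any projection $s\in M_k(\cK(H)^+)$.
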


\begin{proof}
The condition $[p,q]=0\in K^0_{\eps}(F)$ implies that there are elements $(r,r)$ and $(x,x)$ in $\mathcal{P}_{\infty,\eps}(F)$ such that $(p\oplus r,q\oplus r)$ is homotopic to $(x,x)$. Note that $(p\oplus r \oplus (1-r),q\oplus r\oplus (1-r))$ is then homotopic to $(x\oplus (1-r),x\oplus (1-r))$. Suppose that $r\in M_l(\cK(H)^+)$. For $t\in [0,\pi/2]$, define  
$$
r_t:=\begin{pmatrix} r & 0 \\ 0 & 0 \end{pmatrix} +\begin{pmatrix} \cos(t) & -\sin(t) \\ \sin(t) & \cos(t)\end{pmatrix} \begin{pmatrix} 0 & 0 \\ 0 & 1-r \end{pmatrix}\begin{pmatrix} \cos(t) & \sin(t) \\ -\sin(t) & \cos(t)\end{pmatrix}.
$$
Then $(r_t)_{t\in [0,\pi/2]}$ is a path in $M_{2l}(\cK(H)^+)$ connecting $r\oplus (1-r)$ and $1\oplus 0$.  One verifies that $r_t$ is a projection for all $t$, and that $\|[r_t,a]\|<\eps$ for all $a\in F$.  Hence, with $m=l$ and $s=x\oplus (1-r)$, we obtain the claimed result.
\end{proof}

\section{Comparison maps}
In this section, we recall Lipschitz $K$-theory, construct the comparison maps to local $K$-homology, and state the main theorem precisely.

\subsection{Lipschitz \textit{K}-theory}
Definitions and basic properties of Lipschitz $K$-theory were introduced in \cite[Section 4]{Wang_Xie_Yu_Decay_of_scalar_LipK}. We review the case of continuous functions here and omit the proofs. For simplicity, we will use notation that differs slightly from the reference. 
\begin{define}\label{def_Lip_proj_uni}
	Let $X$ be a locally compact metric space, and let $L\ge 0$. An element $f\in M_n(C_0(X)^+)$ is said to be $L$-Lipschitz if 
    \[
    \|f(x)-f(y)\|_{M_n(\C)}\le L\,d(x,y), 
    \]
    for all $x,y\in X$, and the set of such elements is denoted by $M_n(C_0(X)^+)_L$. 
    \\ Let $P^L_n(X)$ be the set of projections in $M_n(C_0(X)^+)_L$, with inclusions given by
	$$
    P_n^L(X)\to P_{n+1}^L(X),\ p\mapsto \begin{pmatrix}
	p &0\\0&0
	\end{pmatrix},\quad P^L(X)=\lim P^L_n(X).
    $$
	Let $U_n^L(X)$ be the set of unitaries in $M_n(C_0(X)^+)_L$, with the natural inclusions given by
	$$
    U_n^L(X)\to U_{n+1}^L(X),\ u\mapsto 
    \begin{pmatrix}
	u &0\\0&1
	\end{pmatrix},\quad U^L(X)=\lim U^L_n(X).
    $$
\end{define} 
	
	We define the following equivalence relations.
	\begin{itemize}
		\item For $(p,l)$ and $(q,l')$ in $P^L(X)\times\N$, we say $(p,l)\sim (q,l')$ if $p\oplus I_{j+l'}$ and $q\oplus I_{j+l}$ are homotopic in $P^{2L}(X)$ for some $j$.
		\item For $u$ and $v$ in $U^L(X)$, we say $u\sim v$ if $u$ and $v$ are homotopic in $U^{2L}(X)$.
	\end{itemize}
	\begin{define}\label{def_LipK_groups}
		We denote by $\sigma$ the canonical homomorphism from $C_0(X)^+$ to $\C$. The Lipschitz controlled $K$-groups of $C_0(X)$ are defined as follows: 
			$$K_0^L(C_0(X)):=\{(p,l)\in P^L(X)\times \N: \mathrm{rank}(\sigma(p))=l\}/\sim.$$
			$$K_1^L(C_0(X)):=U^L(X)/\sim.$$
	        The groups $K_*^L(C_0(X))$ are abelian groups under the operations
	        $[p,l]+[p',l']=[p\oplus p',l+l']$
	        and
	        $[u]+[v]=[u\oplus v]$, respectively.
	\end{define}

    By the same rotation trick as in Lemma \ref{lem_element_to_[p,e_n]}, we have the following normalization.
    \begin{lemma}
    \label{lem_LipK_prescribe}
        Every class $z$ in $K_0^L(C_0(X))$ has a representative $[p,n]$ with $p\in P^L_{2n}(X)$ and $\sigma(p)=1_n\oplus 0_n$ for some $n$. Every class $z$ in $K_1^L(C_0(X))$ has a representative $[u]$ with $u\in U^L_{n}(X)$ and $\sigma(u)=1_n$ for some $n$.
    \end{lemma}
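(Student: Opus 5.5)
We transfer the rotation argument of Lemma \ref{lem_element_to_[p,e_n]} to the Lipschitz setting. The extra point is that every homotopy we use is a conjugation of an $L$-Lipschitz element by a path of \emph{constant} (scalar) unitaries, or a direct sum with constant matrices. Such operations preserve the Lipschitz constant, because $\|wf(x)w^*-wf(y)w^*\|=\|f(x)-f(y)\|$ for a scalar unitary $w$. All the homotopies therefore stay inside $P^L$ or $U^L$, and in particular inside $P^{2L}$ or $U^{2L}$, as the equivalence relations require.

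For $K_0$, start with a representative $(p,l)$ where $p\in P^L_k(X)$ and $\operatorname{rank}\sigma(p)=l$. The matrix $\sigma(p)\in M_k(\C)$ is a projection of rank $l$. Since $U_k(\C)$ is connected, there is a path $w_t$ of scalar unitaries with $w_1\sigma(p)w_1^*=e_l$, and the path $w_tpw_t^*$ lies in $P^L_k(X)$. So we may assume $\sigma(p)=e_l$. Next take $n\ge k$, add zeros so that $p\in P^L_n(X)$, and view it in $P^L_{2n}(X)$. The scalar part is then $e_l\in M_{2n}(\C)$. We want a class with second coordinate $n$, so we add the constant projection $1_{n-l}$ using the relation in Definition \ref{def_LipK_groups}: $(p,l)\sim(p\oplus 1_{n-l},n)$. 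Indeed, with $j=0$ we have $p\oplus 1_{n}$ and $(p\oplus 1_{n-l})\oplus 1_l$, and these differ only by a permutation of constant blocks. A rotation $R(t)$ of the kind used in Lemma \ref{lem_local_Khom_group} connects them through $P^L$.

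Finally, $p\oplus 1_{n-l}$, padded to size $2n$, has scalar part a rank-$n$ diagonal projection. A further conjugation by a path of permutation-type scalar unitaries brings the scalar part to $1_n\oplus 0_n$, and this path stays in $P^L_{2n}(X)$.

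For $K_1$, take $u\in U^L_n(X)$ and set $w=\sigma(u)^*\in U_n(\C)$. Choose a path $w_t$ from $1$ to $w$ in $U_n(\C)$. Then $w_tu$ is a path of unitaries, and $\|w_tu(x)-w_tu(y)\|=\|u(x)-u(y)\|$, so the whole path lies in $U^L_n(X)$. Its endpoint $wu$ satisfies $\sigma(wu)=1_n$.

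The main point to check is the $K_0$ step that changes the bookkeeping integer $l$ to $n$. One has to verify carefully that the stated equivalence relation, with its asymmetric padding $p\oplus I_{j+l'}$ versus $q\oplus I_{j+l}$, is realized by a homotopy of constant-block rearrangements. Such a homotopy does not increase the Lipschitz constant, so the verification reduces to bookkeeping.
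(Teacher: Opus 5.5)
Your proof is correct and follows essentially the paper's argument: conjugation by paths of scalar unitaries, which preserve Lipschitz constants, together with stabilization by a constant projection of rank $n-l$; the odd case is identical to the paper's. The only cosmetic difference is that the paper appends $1_n-\sigma(p)$ directly and uses that $(1_n-\sigma(p),n-l)$ represents zero, whereas you first rotate $\sigma(p)$ to $e_l$ and then check $(p,l)\sim(p\oplus 1_{n-l},n)$ straight from the defining relation.
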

    \begin{proof}
    Let $(p,l)$ represent $z\in K_0^L(C_0(X))$, with $p\in P^L_n(X)$, so that $\mathrm{rank}(\sigma(p))=l$. The pair $(1_n-\sigma(p),n-l)$ represents zero, being a scalar projection of the correct rank, so $z=[p\oplus(1_n-\sigma(p)),n]$. The scalar part of $p\oplus(1_n-\sigma(p))$ is a projection of rank $n$ in $M_{2n}(\C)$, hence of the form $w^*(1_n\oplus 0_n)w$ for a scalar unitary $w$, which is connected to $1$ through scalar unitaries. Since conjugation by scalar unitaries preserves Lipschitz constants, $z=[w(p\oplus(1_n-\sigma(p)))w^*,n]$ is the required representative. In the odd case, $1_n$ is connected to $\sigma(u)^*\in U_n(\C)$ by a path $(w_t)$ of scalar unitaries, and $(w_tu)$ is a path in $U^L_n(X)$ from $u$ to $w_1u$, which satisfies $\sigma(w_1u)=1_n$.
    \end{proof}
    
    Since $C_0(X)^+_L\subset C_0(X)^+_{L'}$ if $L<L'$, there is a natural \textit{forget-control} map
$$ 
    i^{L,L'}\colon K_*^L(C_0(X))\longrightarrow K_*^{L'}(C_0(X)).
$$
   The collection of abelian groups $\{K_*^L(C_0(X))\}_{L\geqslant 0}$, together with the homomorphisms $\{i^{L,L'}\}_{0\leqslant L<L'}$, forms a direct system over $\R_{\geqslant 0}$ as $L$ increases. Restricted to $L>0$ and read in the direction of decreasing $L$, the same family is an inverse system. The inverse limit $\varprojlim_{L\to 0}K^L_*(C_0(X))$, the group of coherent families $(z_L)_{L>0}$ with $i^{L,L'}(z_L)=z_{L'}$ for $L\le L'$, appears in Corollary \ref{cor_inverse_limit} below. 
   
\subsection{The comparison maps}
Let $X$ be a discrete proper metric space with bounded geometry, let $H=\ell^2(X)$, and let $\rho_X\colon C_{0}(X)\rightarrow \cB(H)$ be the representation by multiplication. Let 
\[ 
N_X(r):=\sup_{x\in X}|B(x,r)|<\infty.
\]
The range of $\rho_X$ lies in $\cK(H)$. Therefore, we have a map
\begin{align*}
    \nu\colon  C_0(X)^+&\longrightarrow \cK(H)^+\\
    (f,\lambda) &\mapsto (\rho_X(f),\lambda),
\end{align*}
which we extend to $M_n(C_0(X)^+)\to M_n(\cK(H)^+)$, still denoted by $\nu$. For notational convenience, we may omit $\pi$ and $\rho$.

\begin{define}\label{def_local_map}
For any $r>0$, $L>0$ and any subset $F$ of the unit ball of $\C^*_u[X]_r $, the comparison maps are 
\begin{align*}
\nu_0\colon  K^{L}_0(C_0(X))&\longrightarrow K^0_{\Omega_X(r,L)}(F), & \nu_1\colon  K^{L}_1(C_0(X))&\longrightarrow K^1_{\Omega_X(r,L)}(F),\\
[p,k]&\longmapsto [\nu(p),\nu(\sigma(p))], & [u]&\longmapsto [\nu(u)],
\end{align*}
where $\Omega_X(r,L)=4rL\cdot \sqrt{N_X(r)}$.
\end{define}

\begin{prop}\label{proe_local_map_well_defined}
    The maps $\nu_*$ are well-defined homomorphisms. They are compatible with the forget-control maps in the following sense: for $0<L\le L'$,
    \[
    \nu_*\circ i^{L,L'}=\phi^{\Omega_X(r,L),\Omega_X(r,L')}\circ\nu_*\colon K^L_*(C_0(X))\longrightarrow K^*_{\Omega_X(r,L')}(F),
    \]
    and for $r\le s$ the composite $K^L_*(C_0(X))\xrightarrow{\nu_*}K^*_{\Omega_X(s,L)}(E_s)\xrightarrow{\phi}K^*_{\Omega_X(s,L)}(E_r)$ agrees with $\phi^{\Omega_X(r,L),\Omega_X(s,L)}\circ\nu_*$.
\end{prop}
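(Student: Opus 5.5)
The whole statement rests on one commutator estimate. If $f\in M_n(C_0(X)^+)_L$ and $a\in F\subset E_r$, then
\[
\|[\nu(f),\pi(a)]\|\le 2rL\sqrt{N_X(r)}=\tfrac12\Omega_X(r,L).
\]
To prove it, I compute matrix entries. Since $\nu(f)$ acts diagonally by $f(x)$, the commutator is the $X\times X$ block matrix with entries $a_{xy}\,(f(x)-f(y))$. These vanish unless $d(x,y)\le r$, and each block has norm at most $|a_{xy}|\,L\,r\le Lr$, because $|a_{xy}|\le\|a\|\le 1$. A block matrix whose rows and columns each contain at most $N_X(r)$ nonzero blocks, each of norm at most $Lr$, has norm at most $N_X(r)\,Lr$ by the Schur test. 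This bound is weaker than the stated one, which has $\sqrt{N_X(r)}$.

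To get the square root, I would write $[f,a]=\sum_{z}$ over "diagonals", or equivalently bound $\|[f,a]\xi\|^2$ by Cauchy--Schwarz. The $x$-component of $[f,a]\xi$ is $\sum_{y\in B(x,r)}a_{xy}(f(x)-f(y))\xi_y$. I first try to bound $\sum_y|a_{xy}|^2\le\|a\|^2\le1$, since row norms of $a$ are at most $1$. Cauchy--Schwarz then gives $\|(\cdot)_x\|^2\le (Lr)^2\sum_{y\in B(x,r)}\|\xi_y\|^2$. Summing over $x$ gives $(Lr)^2N_X(r)\|\xi\|^2$, so $\|[f,a]\|\le rL\sqrt{N_X(r)}$. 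This is well within $\Omega_X(r,L)/2$. The factor $4$ in $\Omega$ leaves room for the $2L$ appearing in the homotopy relations, and for commutators with $a^*$, which is also in $F$. For the scalar part $\nu(\sigma(p))$, the commutator vanishes because the entries are constant matrices.

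Well-definedness then follows. Take a representative $(p,k)$ with $p\in P^L_n(X)$. The pair $(\nu(p),\nu(\sigma(p)))$ is a pair of projections in $M_n(\cK(H)^+)$ with equal scalar classes, and its commutators with $F$ are $<\Omega_X(r,L)$. The equivalence relation on $K^L_0$ uses homotopies in $P^{2L}$ and stabilization by $I_j$. Under $\nu$, a homotopy in $P^{2L}$ goes to a path with commutators at most $2rL\sqrt{N_X(r)}<\Omega_X(r,L)$. The scalar parts $\nu(\sigma(p_t))$ form a scalar path, so the pair path stays in $\cP_{\infty,\Omega}(F)$. Adding $I_j$ to both entries gives the relation $(p,q)\sim(p\oplus p',q\oplus p')$. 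The bookkeeping with $l,l'$ reduces to adding scalar projections, which are trivial. For $K_1$, a homotopy in $U^{2L}$ maps to a path with commutators $<2\Omega$, which is exactly what Definition \ref{def_K_hom_K1} allows. The map $\nu$ respects direct sums, so $\nu_*$ is additive.

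Compatibility is formal. For $L\le L'$ both composites send $[p,k]$ to the class of the same pair $(\nu(p),\nu(\sigma(p)))$ in $K^*_{\Omega_X(r,L')}(F)$, and $\phi$ is induced by inclusion of cycles. For $r\le s$, we have $E_r\subseteq E_s$. A cycle built at level $\Omega_X(s,L)$ for $E_s$ is also one for $E_r$. The same underlying pair also defines the element of $K^*_{\Omega_X(r,L)}(E_r)$, and $\Omega_X(r,L)\le\Omega_X(s,L)$. So both composites are the class of $(\nu(p),\nu(\sigma(p)))$ in $K^*_{\Omega_X(s,L)}(E_r)$.

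The main obstacle is the sharp commutator estimate with $\sqrt{N_X(r)}$. It requires checking that the row-norm bound for $a$ combined with Cauchy--Schwarz is legitimate for matrix-valued $f$. This works because each block $f(x)-f(y)$ has operator norm at most $Lr$.
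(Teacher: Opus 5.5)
Your proposal is correct and follows the paper's proof. You use the same entrywise formula for the commutator, reach the same bound $rL\sqrt{N_X(r)}$, which sits strictly below $\Omega_X(r,L)$ even for $2L$-Lipschitz homotopies, and treat the equivalence relations and forget-control maps in the same formal way. The only difference is cosmetic: you apply Cauchy--Schwarz directly to $\|[f,a]\xi\|^2$, using the row bound $\sum_y|a_{xy}|^2\le\|a\|^2$ and the count $|B(y,r)|\le N_X(r)$, whereas the paper bounds the $\ell^1$ row and column sums by $rL\sqrt{N_X(r)}\,\|T\|$ and then invokes the Schur test.
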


\begin{proof}
    We have already seen that $\nu(p)\in M_n(\cK(H)^+)$. We now check the commutator estimates. For any operator $T\in \C_u^*[X]_r$ and any $L$-Lipschitz matrix-valued function $f\in M_n(C_0(X)^+)_L$, the $(x,y)$ component of the commutator matrix is 
    \[ 
    [T\otimes 1_n,f]_{xy}=T_{xy}(f(y)-f(x)),\quad \forall x,y\in X.
    \]
    The propagation bound implies that each row and each column of this matrix has at most $N_X(r)$ nonzero entries. Hence, for each fixed $x$, we have 
    \begin{align*}
         \sum_y\|[T\otimes 1_n,f]_{xy}\|&\le  rL\sum_y|T_{xy}| \\
         &\le rL\sqrt{N_X(r)} \Big(\sum_y|T_{xy}|^2\Big)^{\frac{1}{2}}
         \\ &\le rL\sqrt{N_X(r)}\  \|T\|.
    \end{align*}
    For fixed $y$ we have the same upper bound. We now use the Schur test in the following form: if $A=(A_{xy})_{x,y\in X}$ is a bounded operator on $\ell^2(X)\otimes\C^n$ with entries $A_{xy}\in M_n(\C)$, then
    \begin{equation}\label{eq_schur}
    \|A\|\le\Big(\sup_x\sum_y\|A_{xy}\|\Big)^{1/2}\Big(\sup_y\sum_x\|A_{xy}\|\Big)^{1/2},
    \end{equation}
    which follows from the Cauchy--Schwarz inequality: $$\|A\xi\|^2\le\sum_x\big(\sum_y\|A_{xy}\|\,\|\xi_y\|\big)^2\le\sum_x\big(\sum_y\|A_{xy}\|\big)\big(\sum_y\|A_{xy}\|\,\|\xi_y\|^2\big).$$ Applied to the commutator, \eqref{eq_schur} gives
    \[
    \|[T\otimes 1_n,f]\|\le rL\sqrt{N_X(r)}\ \|T\|.
    \]
    Since the operators are taken from the unit ball, we obtain
    \[ \|[a,p]\|\le rL\sqrt{N_X(r)}<\Omega_X(r,L), \quad \forall a\in E_r,\ p\in M_n(C_0(X)^+)_L. \]
    It remains to check that the comparison maps respect the equivalence relations and block sums. For a $2L$-Lipschitz homotopy the same estimate gives a commutator bound of $2rL\sqrt{N_X(r)}<\Omega_X(r,L)$, so homotopic representatives have homotopic images, and block sums are respected by the formulas on representatives. The two compatibilities with the forget-control maps hold because the formula for $\nu_*$ on representatives involves neither $L$ nor $F$ nor the tolerance.
\end{proof}

\begin{remark}\label{rem_GY_gap}
The proof of \cite[Proposition 4.8]{GuoYu2023Covering} contains a gap for matrix-valued $f$: the inequality $\|f(x)-f(y)\|\ge\max_i\sum_j|f_{ij}(x)-f_{ij}(y)|$ used there fails for the operator norm, and the argument given there only yields a bound that grows linearly with the matrix size $n$. In the discrete case, the Schur test above gives a bound independent of $n$, at the cost of an explicit factor $\sqrt{N_X(r)}$. The main results of \cite{GuoYu2023Covering} are unaffected: the pairing there is applied to Lipschitz $K$-theory classes of a fixed matrix size $n$, and the dependence on $n$ can be absorbed into the pairability condition by taking the Lipschitz constant $L$ smaller, so that only the explicit constants change.
\end{remark}

\subsection{The main theorem and inverse limits}
A metric space $X$ is called $(m,C)$-chain geodesic if 
for every $x,y\in X$ there is a chain of points
\[   
x=x_0,x_1,\dots,x_N=y
\]
satisfying $d(x_i,x_{i+1})\le m$ and $N\le Cd(x,y)$, where $C\ge 1$. 
Examples include word length metrics on finitely generated groups, which are $(1,1)$-chain geodesic. 

Inspired by the pairings in \cite{GuoYu2023Covering} and \cite{Yu_Novikov_Conjecture}, we now state our main theorem, Theorem \ref{thm_main_theorem}, in precise form. It relates the asymptotic behaviour of Lipschitz $K$-theory to that of the quantitative $K$-homology of the uniform Roe algebra. 

\begin{mainthmrestated}
    Let $X$ be a discrete proper metric space with bounded geometry, and suppose that $X$ is $(m,C)$-chain geodesic for some $m>0$ and $C\ge 1$. Let $r\ge \max(m,1)$, $L>0$ satisfy $\Omega_X(r,L)<1/100$, and let $E_r$ be the unit ball of $\C^*_u[X]_r$. Put $M=80Cr\sqrt{N_X(r)}$ and $L'=L/M$. Then the comparison maps 
    $$
    \nu_*\colon K^{L}_*(C_0(X))\longrightarrow K^*_{\Omega_X(r,L)}(E_r)
    $$ 
    are quantitative isomorphisms in the following sense:
    \begin{enumerate}[label=(\arabic*)]
        \item 
        $\forall y\in \im \phi^{\Omega_X(r,L'),\Omega_X(r,L)},\ 
        \exists z\in K^{L}_*(C_0(X)), \text{ such that } \nu_*(z)=y.$
        \item
        $\forall z\in K^{L'}_*(C_0(X)),\ \text{if }\nu_*(z)=0\in K^*_{\Omega_X(r,L')}(E_r),\text{ then } i^{L',L}(z)=0.$
    \end{enumerate}
\end{mainthmrestated}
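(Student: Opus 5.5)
The plan is to build a ``diagonalization'' procedure that turns a cycle almost commuting with $E_r$ into a Lipschitz matrix-valued function on $X$, and then use it once for surjectivity (1) and along a homotopy for injectivity (2). Write $\eps=\Omega_X(r,L)$ and $\eps'=\Omega_X(r,L')=\eps/M$.

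Given $y$ in the image of $\phi^{\eps',\eps}$, use Lemma \ref{lem_element_to_[p,e_n]} and Lemma \ref{lem_precribe_scalar_P1_U1} to represent it by $[p,1_n\oplus 0_n]$ (resp.\ $[u]$ with $u-1_n\in M_{n}(\cK(H))$), where $p$ (resp.\ $u$) almost commutes with $E_r$ at level $\eps'$. The key steps are as follows.

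\emph{Diagonal approximation.} For every sign function $s\colon X\to\{\pm1\}$, the multiplication operator $s$ has propagation $0\le r$ and norm $1$, so it lies in $E_r$. Average the conjugates $s\,p\,s$ over $s$; formally, take the conditional expectation $D$ onto diagonal matrices, realized as the integral over the compact group $\{\pm1\}^X$ with Haar measure. This gives $\|p-D(p)\|\le\sup_s\|[s,p]\|<\eps'$. Here $D(p)(x)=p_{xx}\in M_n(\C)$, and $D(p)-\sigma(p)\in C_0(X)\otimes M_n$ because $p-\sigma(p)$ is compact.

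\emph{Local variation bound.} For $0<d(x,y)\le m\le r$, the partial isometry $V_{xy}=\delta_y\langle\delta_x,\cdot\rangle$, or the self-adjoint rank-two swap $V_{xy}+V_{yx}$, lies in $E_r$. Compressing the commutator $[V_{xy},p]$ by $\delta_y$ on both sides gives $\|p_{xx}-p_{yy}\|\le\|[V_{xy},p]\|<\eps'$.

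\emph{Global Lipschitz bound.} Chaining along an $(m,C)$-chain of at most $C\,d(x,y)$ steps gives $\|p_{xx}-p_{yy}\|\le C\eps'd(x,y)$, so $D(p)$ is $C\eps'$-Lipschitz. This equals $4Cr\sqrt{N_X(r)}L/M=L/20$.

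\emph{Repairing to a projection or unitary.} Since $\|D(p)^2-D(p)\|<3\eps'$, apply Proposition \ref{prop_almost_proj_unitary}: take $f(D(p))$ pointwise, or the polar part pointwise in the unitary case. Functional calculus on the spectral gap is Lipschitz with a constant of order $1$ when $\eps<1/100$ (write $f$ via a contour integral, or use a smooth $f$ on the two spectral intervals), so the result is $L$-Lipschitz with room to spare; the factor $80$ in $M$ is meant to absorb these constants. This gives $z\in K^L_*$.

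\emph{Checking $\nu_*(z)=y$.} We have $\|\nu(f(D(p)))-p\|\le\|f(D(p))-D(p)\|+\|D(p)-p\|$, which is a small multiple of $\eps'$ and hence below $\eps/4$. Corollary \ref{cor_norm_closed_proj_uni} then gives a path at level $2\cdot$(small) $\le\eps$ joining them, and the scalar parts agree. The class $y$ is the $\phi^{\eps',\eps}$-image, so it suffices to connect in $\cP_{\cdot,\eps}(E_r)$, and we never need to stay at level $\eps'$.

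For injectivity, let $z=[p,n]\in K^{L'}_*$ with $\nu_*(z)=0$ at level $\eps'$. Lemma \ref{lem_K0_element_to_be_zero} gives a path $(p_t,q_t)$ in $\cP_{\cdot,\eps'}(E_r)$ from $\nu(p)\oplus1\oplus0$ versus $\nu(\sigma p)\oplus1\oplus0$ to some $(s,s)$; in odd degree, a path of unitaries at level $2\eps'$ from $\nu(u)\oplus1$ to $1$. Apply the construction above uniformly in $t$: $t\mapsto f(D(p_t))$ is norm continuous, since $D$ is a contraction, and each is $O(C\eps')$-Lipschitz, which is $\le 2L$. This gives homotopies in $P^{2L}$ (resp.\ $U^{2L}$).

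At the endpoints, $D(\nu(p))=p$ is already a projection, so $f$ fixes it. The pair $(D s, D s)$ gives equal projections, and the resulting relation yields $[p,n]\oplus(\text{trivial})\sim$ trivial. Since $L'$-Lipschitz functions are $L$-Lipschitz, this shows $i^{L',L}(z)=0$. Some care is needed to match the stabilization bookkeeping of $K^L_0$ (pairs $(p,l)$ with $1_j$ padding) with Lemma \ref{lem_K0_element_to_be_zero}, but the scalar parts are constant along the normalized paths by Lemma \ref{lem_precribe_scalar_P1_U1}(2).

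The main obstacle I expect is the Lipschitz control of the functional-calculus repair. The averaging and chaining steps are clean, but one must show that $a\mapsto f(a)$, for $a$ in a region of almost-projections or almost-unitaries, satisfies $\|f(a)-f(b)\|\le K\|a-b\|$ with an explicit $K$ independent of $n$. The natural route is the resolvent formula $f(a)=\frac1{2\pi i}\oint_{|\lambda-1|=1/2}(\lambda-a)^{-1}d\lambda$, which gives $K\le$ (contour length)$\cdot\sup\|(\lambda-a)^{-1}\|\,\|(\lambda-b)^{-1}\|\approx 2\pi\cdot\tfrac12\cdot 16/(2\pi)$, a constant around $8$. For the polar part, use $(v^*v)^{-1/2}$ via a power series. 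I would check that these constants fit inside the factor $80$.
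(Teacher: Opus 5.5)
Your proposal is correct, and its architecture matches the paper's proof: the sign-group expectation $\cE$, the shifts $V_{xy}$ combined with chaining, a functional-calculus repair, and the same construction run along the homotopy from Lemma \ref{lem_K0_element_to_be_zero} (or along a path of unitaries at level $2\eps'$) for injectivity. The one real difference is the order of the Lipschitz estimate and the repair.

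\emph{Your order.} You bound the Lipschitz constant of $\cE(p)$ first. You then need a dimension-free Lipschitz estimate for $a\mapsto f(a)$ and for the polar part. That estimate does hold. For self-adjoint $a,b$ with spectra within $\delta$ of $\{0,1\}$, the resolvent formula over $|\lambda-1|=1/2$ gives $\|f(a)-f(b)\|\le\frac12(\frac12-\delta)^{-2}\|a-b\|\approx 2\|a-b\|$. An integral formula for $t^{-1/2}$ gives a comparable constant for the polar part. So your constants fit within the factor $80$.

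\emph{The paper's order.} The paper repairs first and never needs such an estimate. The diagonal projection $p^\flat=f(\cE(p))$ lies within $5\eps'$ of $p$, so $\|[p^\flat,V_{xy}]\|\le\|[p,V_{xy}]\|+2\|p^\flat-p\|<11\eps'$. Since \eqref{eq_shift_estimate} is an equality for diagonal operators, Lemma \ref{lem_shift_lipschitz} applies directly to $p^\flat$, and likewise to each $p_t^\flat$ along a homotopy. The same triangle inequality also gives the commutator bounds you need anyway to invoke Corollary \ref{cor_norm_closed_proj_uni}. So the step you flagged as the main obstacle can be avoided entirely.

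\emph{A small slip.} The compression of $[V_{xy},p]$ that yields $p_{xx}-p_{yy}$ is $\chi_y[V_{xy},p]\chi_x$. Compressing by $\delta_y$ on both sides instead picks out an off-diagonal entry of $p$. The inequality $\|p_{xx}-p_{yy}\|\le\|[V_{xy},p]\|$ you state is nonetheless correct.
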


Passing to inverse limits, the quantitative isomorphism of Theorem \ref{thm_main_theorem} becomes an isomorphism of groups. Recall that $\varprojlim_{L\to 0}K^L_*(C_0(X))$ denotes the group of coherent families $(z_L)_{L>0}$ with $z_L\in K^L_*(C_0(X))$ and $i^{L,L'}(z_L)=z_{L'}$ for $L\le L'$; it is the group of Lipschitz $K$-theory classes that exist, compatibly, at every Lipschitz scale.

\begin{cor}\label{cor_inverse_limit}
Let $X$ be a discrete proper metric space with bounded geometry which is $(m,C)$-chain geodesic, and let $r\ge\max(m,1)$. Then the comparison maps $\nu_*$ induce an isomorphism
\[
\nu_\infty\colon \varprojlim_{L\to 0}K^L_*(C_0(X))\xrightarrow{\ \cong\ }\varprojlim_{\eps\to 0}K^*_\eps(E_r)=QK^*_r(X).
\]
This isomorphism is compatible with the restriction maps $QK^*_s(X)\to QK^*_r(X)$ for $s\ge r\ge\max(m,1)$, which are therefore isomorphisms as well. Consequently,
\[
QK^*(X)\cong QK^*_r(X)\cong\varprojlim_{L\to 0}K^L_*(C_0(X))
\]
for every $r\ge\max(m,1)$.
\end{cor}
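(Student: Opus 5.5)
We deduce the corollary from the precise form of Theorem \ref{thm_main_theorem} by a purely formal inverse-limit argument. The first step is to note that the groups $K^*_\eps(E_r)$ for $\eps\to 0$ are cofinal in the system $\calX_r$: for any $(F,\eps)$ with $F\subset \C^*_u[X]_r$ in the unit ball we have $F\subset E_r$, so $(F,\eps)\le(E_r,\eps)$. Hence $QK^*_r(X)=\varprojlim_{\eps\to0}K^*_\eps(E_r)$. Since $L\mapsto\Omega_X(r,L)=4rL\sqrt{N_X(r)}$ is a strictly increasing bijection of $(0,\infty)$, we may reindex this limit by $L$: an element of $QK^*_r(X)$ is a coherent family $(y_L)_{L}$ with $y_L\in K^*_{\Omega_X(r,L)}(E_r)$, and we may restrict to $L$ small, say $\Omega_X(r,L)<1/100$. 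By Proposition \ref{proe_local_map_well_defined}, $\nu_*$ commutes with the forget-control maps, so it sends coherent families $(z_L)$ to coherent families $(\nu_*(z_L))$. This defines $\nu_\infty$ as a homomorphism.

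For injectivity, suppose $\nu_\infty((z_L))=0$. Fix $L$ with $\Omega_X(r,L)<1/100$ and set $L'=L/M$. Then $\nu_*(z_{L'})=0$ in $K^*_{\Omega_X(r,L')}(E_r)$. Part (2) of the theorem gives $z_L=i^{L',L}(z_{L'})=0$. Since small $L$ are cofinal, the whole family vanishes.

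For surjectivity, let $(y_L)$ be coherent. For each small $L$, $y_L=\phi(y_{L'})$ lies in the image of forget-control from level $L'$, so part (1) provides some $w_L\in K^L_*(C_0(X))$ with $\nu_*(w_L)=y_L$. These $w_L$ need not be coherent, and repairing this is the main obstacle. We define instead
\[z_L:=i^{L',L}(w_{L'}),\]
using a lift $w_{L'}$ at level $L'$ (which exists once $y_{L'}$ lies in the image from level $L'/M$, true by coherence).

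To check coherence, take $L_1\le L_2$. The elements $i^{L_1',L_2}(w_{L_1'})$ and $i^{L_2',L_2}(w_{L_2'})$ both have image $y_{L_2'}$ under $\nu_*$ after pushing up to a common level $\le L_2'$ where possible. More precisely, consider their pushforwards to level $L_2'$ when $L_1'\le L_2'$. These two lifts of $y_{L_2'}$ have difference $d$ with $\nu_*(d)=0$ in $K^*_{\Omega_X(r,L_2')}(E_r)$. Part (2), applied at the pair $(L_2'',L_2')$, needs $d$ to come from level $L_2'/M$. So we should push everything from level $L''=L/M^2$: define $z_L:=i^{L/M^2,L}(w_{L/M^2})$. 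Then the difference of two choices lives at level $L/M$, lies in the kernel of $\nu_*$ there, and by (2) dies at level $L$. This yields independence of choices and hence coherence, with $\nu_*(z_L)=y_L$ by compatibility. The bookkeeping of which level each lift lives at is where care is needed.

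For the compatibility with restriction maps $QK^*_s\to QK^*_r$, we use the second compatibility in Proposition \ref{proe_local_map_well_defined}. Together with the cofinality reindexing ($\Omega_X(s,L)$ versus $\Omega_X(r,L)$ are both monotone in $L$), it gives a commutative triangle: $\nu_\infty$ for $r$ equals the restriction composed with $\nu_\infty$ for $s$. Since both $\nu_\infty$ are isomorphisms, the restriction is an isomorphism. Finally, $QK^*(X)$ is the inverse limit of a system that is eventually constant by isomorphisms (for $r\ge\max(m,1)$, which is cofinal), so it is isomorphic to $QK^*_r(X)$.
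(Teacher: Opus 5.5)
Your proposal is correct and follows the paper's proof essentially step for step: injectivity via $z_L=i^{L/M,L}(z_{L/M})$ and part (2); surjectivity by pushing forward lifts from a lower Lipschitz level, so that part (2) forces coherence; and the restriction maps via the second compatibility in Proposition \ref{proe_local_map_well_defined}. The one superfluous step is your switch to $z_L=i^{L/M^2,L}(w_{L/M^2})$, since the difference $d\in K^{L_2'}_*(C_0(X))$ with $\nu_*(d)=0$ in $K^*_{\Omega_X(r,L_2')}(E_r)$ already satisfies the hypothesis of part (2) for the pair $(L_2',L_2)$, so $i^{L_2',L_2}(d)=0$ and your original choice $z_L=i^{L/M,L}(w_{L/M})$ is already coherent, exactly as in the paper (your $M^2$ version is also valid, just wasteful).
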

\begin{proof}
Write $A_L=K^L_*(C_0(X))$ and $B_\eps=K^*_\eps(E_r)$. The map 
$$L\mapsto\Omega_X(r,L)=4r\sqrt{N_X(r)}\,L$$ is an increasing bijection of $(0,\infty)$, and by Proposition \ref{proe_local_map_well_defined} the maps $\nu_*\colon A_L\to B_{\Omega_X(r,L)}$ commute with the structure maps $i^{L,L''}$ and $\phi^{\Omega_X(r,L),\Omega_X(r,L'')}$ of the two inverse systems. Hence they induce a homomorphism $\nu_\infty$ between the inverse limits. Choose $L_0>0$ with $\Omega_X(r,L_0)<1/100$; since the inverse limits only depend on the systems restricted to $L\le L_0$, it suffices to work with such $L$. Let $M=80Cr\sqrt{N_X(r)}$ be the constant of Theorem \ref{thm_main_theorem}, and note that $\Omega_X(r,L/M)=\Omega_X(r,L)/M$.

\emph{Injectivity.} Let $(z_L)_L$ be a coherent family with $\nu_*(z_L)=0$ for all $L$. For $L\le L_0$, coherence gives $z_L=i^{L/M,L}(z_{L/M})$, and $\nu_*(z_{L/M})=0$ in $B_{\Omega_X(r,L/M)}$, so $z_L=0$ by part (2) of Theorem \ref{thm_main_theorem}. 

\emph{Surjectivity.} Let $(y_\eps)_\eps$ be a coherent family in $\varprojlim_\eps B_\eps$. For $L\le L_0$, the element $y_{\Omega_X(r,L)}=\phi^{\Omega_X(r,L/M),\Omega_X(r,L)}(y_{\Omega_X(r,L/M)})$ lies in the image of the forget-control map, so by part (1) of Theorem \ref{thm_main_theorem} we may choose $z_L\in A_L$ with $\nu_*(z_L)=y_{\Omega_X(r,L)}$. These choices need not be coherent, so put $\tilde z_L:=i^{L/M,L}(z_{L/M})$ for $L\le L_0$. If $L\le L''\le L_0$, then $w:=i^{L/M,L''/M}(z_{L/M})-z_{L''/M}\in A_{L''/M}$ satisfies 
\[
\nu_*(w)=\phi^{\Omega_X(r,L/M),\Omega_X(r,L''/M)}\big(y_{\Omega_X(r,L/M)}\big)-y_{\Omega_X(r,L''/M)}=0,
\]
so $i^{L''/M,L''}(w)=0$ by part (2) of Theorem \ref{thm_main_theorem}, that is, $i^{L,L''}(\tilde z_L)=\tilde z_{L''}$. Thus $(\tilde z_L)_{L\le L_0}$ is a coherent family, and $\nu_*(\tilde z_L)=\phi^{\Omega_X(r,L/M),\Omega_X(r,L)}(y_{\Omega_X(r,L/M)})=y_{\Omega_X(r,L)}$ for all $L\le L_0$. Hence $\nu_\infty(\tilde z)=y$.

\emph{Compatibility in $r$.} For $s\ge r\ge\max(m,1)$ the restriction map $QK^*_s(X)\to QK^*_r(X)$ is induced by the forget-control maps $\phi^{(E_s,\eps)(E_r,\eps)}$, with which $\nu_*$ commutes by Proposition \ref{proe_local_map_well_defined}. Hence the restriction map composed with $\nu_\infty$ for $E_s$ equals $\nu_\infty$ for $E_r$, and since both of these are isomorphisms, so is the restriction map. The last assertion follows because $QK^*(X)$ is the inverse limit of the $QK^*_r(X)$ along these restriction maps.
\end{proof}

\begin{remark}
The argument shows more precisely that $\nu_*$ is an isomorphism of pro-groups between the two inverse systems, after the reindexing $\eps=\Omega_X(r,L)$. In particular it also induces an isomorphism of the derived limits $\varprojlim^1$, which measure the failure of these inverse limits to behave well in exact sequences.
\end{remark}

\section{Proof of the main theorem}
We now prove Theorem \ref{thm_main_theorem}. The main tools are an averaging method and the use of shift operators as test elements. We first illustrate these techniques on a space of two points, and then apply them to general spaces.

Throughout the section, we extend the conjugation action $aTa^*$, for $T\in \cK(H)$, and $a\in U(H)$, to $\cK(H)^+$ by setting $a(T+\lambda I)a^*=aTa^*+\lambda I$.

\subsection{The two-point space}
\label{example_K_one_pt}
Note that when $X=\{\text{pt}\}$, we have $C^*_u(X)=\C$, which commutes with every operator, so $K_{\eps}^*(F)\cong K_*(\C)$ for every $F$. The first interesting case is therefore a space with two points.

\begin{example}\label{exam_2_pts_quant_K}
    Suppose $X=\set{x_1,x_2}$ with $d(x_1,x_2)=1$. Then $C_u^*(X)=M_2(\C)$, and the projections representing classes in quantitative $K$-homology are $p\in M_n(M_2(\C)^+)$. Take the subset 
    \[ S=\Set{ \MAT{1,0;0,1},\MAT{1,0;0,-1},\MAT{-1,0;0,1},\MAT{-1,0;0,-1}}\subseteq \C_u^*[X]_0. 
    \]
    Let $F$ be any subset of the unit ball of $\C_u^*[X]_r$, and let $\eps<1/100$. 
    Then for any $(p,q)\in \cP_{n,\eps}(F\cup S)$, the expectation $p'=\frac{1}{4}\sum_{a\in S} a pa^*$ is exactly the diagonal part of $p$, and it satisfies 
    \[ 
    \Big\|p-\frac{1}{4}\sum_{a\in S} a pa^*\Big\|\le \frac{1}{4}\sum_{a\in S}\|ap-pa\|<\eps,
    \]
    which shows that $p'$ is a $3\eps$-almost projection. By Proposition \ref{prop_almost_proj_unitary}, there is a projection $f(p')\in M_n(M_2(\C)^+)$ with all entries diagonal such that 
    \[  \|p-f(p')\|<7\eps.   \]
    We obtain $f(q')$ in the same way. Since 
    \[
    \|[f(p'),a]\|=\|[p,a]-[p-f(p'),a]\|< \|[p,a]\|+14\eps<20\eps,
    \]
    we see that $[p,q]=[f(p'),f(q')]\in K^0_{20\eps }(F\cup S)$, via the functional calculus applied to a linear homotopy. 
    Therefore, if $r=0$, the image of the forget-control map 
    $$
    \phi\colon  K^0_{\eps}(F\cup S)\to K^0_{20\eps}(F\cup S)\to K^0_{20\eps}(F)
    $$ 
    consists of classes represented by diagonal pairs $(f(p'),f(q'))$. Diagonal pairs are classified, up to homotopy and simultaneous stabilization through diagonal pairs, by the two rank differences of $f(p')$ and $f(q')$ at $x_1$ and at $x_2$, so the image above is the image of a homomorphism from $\Z\oplus \Z$. This homomorphism need not be injective: for $F=\emptyset$ it is the map $\Z\oplus\Z\to K_0(\cK(H))\cong\Z$, $(a,b)\mapsto a+b$. Passing to the inverse limit, however, one finds $QK^0_0(X)\cong\Z\oplus \Z$. Indeed, for $\eps<1/100$ the pointwise rank differences define a homomorphism $\rho_\eps\colon K^0_\eps(E_0)\to\Z\oplus\Z$, $[p,q]\mapsto\big(\mathrm{rank}\,f(p')(x_i)-\mathrm{rank}\,f(q')(x_i)\big)_{i=1,2}$, which is well defined and compatible with the forget-control maps because averaging and rounding are norm continuous and ranks are locally constant. It is surjective, with coherent preimages represented by pairs of projections supported at $x_1$ and at $x_2$, which commute with $E_0$; and its kernel is contained in the kernel of $K^0_\eps(E_0)\to K^0_{20\eps}(E_0)$, because after stabilization two diagonal pairs with equal scalar parts and equal pointwise rank differences are connected by a path of diagonal pairs. Hence $\varprojlim_\eps\rho_\eps\colon QK^0_0(X)\to\Z\oplus\Z$ is an isomorphism.

    The case $r<1$ is the same as $r=0$, since $d(x_1,x_2)=1$. If $r\ge 1$, then there is a shift 
    $$ V=\MAT{0,1;0,0}\in \C_u^*[X]_1. $$
    Let $F$ be a subset of $E_1$. The forget-control map 
    \[
    \phi\colon  K^0_{\eps}(F\cup S\cup \{V\})\to K^0_{20\eps}(F\cup S\cup \{V\})  
    \]
    still sends $[p,q]$ to $[f(p'),f(q')]$, since the path between $(p,q)$ and $(f(p'),f(q'))$ is constructed from norm closeness, and hence also almost commutes with $V$. The additional condition that $f(p')$ almost commutes with $V$ at level $20\eps$ implies that $\|f(p')(x_1)-f(p')(x_2)\|<20\eps<1$, so $f(p')$ has the same rank at $x_1$ and at $x_2$. Moreover, the linear path from $f(p')$ to the diagonal projection taking the value $f(p')(x_1)$ at both points, which commutes with all of $M_2(\C)$, consists of $100\eps^2$-almost projections almost commuting with $F$ at level $20\eps$; applying Proposition \ref{prop_almost_proj_unitary} along it as before gives a path at level $24\eps$. In particular the image of 
    \[ 
    \phi\colon  K^0_{20\eps}(F\cup S\cup \{V\})\to K^0_{200\eps}(F)  
    \]
    (the constant $200$ is a generous upper bound) consists of classes represented by pairs of projections commuting with $M_2(\C)$. Consequently, on the image of $K^0_\eps(E_1)$ the homomorphism $\rho_\eps$ takes values in the diagonal copy of $\Z$, and the same argument as in the case $r=0$ shows that $QK_1^0(X)\cong \Z$.
\end{example}

\subsection{Diagonal signs and the expectation}
\label{section_general_proof}
We now prove Theorem \ref{thm main}. Let $X$ be a discrete proper metric space with bounded geometry which is $(m,C)$-chain geodesic. Let $r\ge \max(m,1)$ and $L>0$ satisfy $\Omega_X(r,L)<1/100$, and let $E_r$ be the unit ball of $\C_u^*[X]_r$. The proof rests on two families of test operators in $E_r$: diagonal sign operators, which force an almost commuting projection or unitary to be close to a diagonal one, and shifts of length at most $m$, which control the variation of a diagonal operator between nearby points. We treat these two steps in separate lemmas.

Let $\chi_x$ be the projection in $\cB(H)$ onto $\delta_x$. Consider the global sign group
\[
S_X=\Set{ \sum_{x\in X}\theta_x\chi_x\in \cB(H)\mid  \theta_x\in \{1,-1\} \text{ for all } x\in X},
\]
where the sum converges in the strong operator topology. This is a compact group in the strong operator topology, and hence carries a normalized Haar measure $\mu_S$. Every element of $S_X$ is a self-adjoint unitary of propagation $0$, so $S_X\subseteq E_r$ for every $r\geq 0$. Note that the scalar part of $T\in M_n(\cK(H)^+)$ commutes with $S_X$, and that compact operators can be approximated by finite rank operators, so the map $s\mapsto sTs^*$ is continuous in operator norm. Hence we may define the expectation
\[
\cE(T)=\int_{S_X} sTs^*\, d\mu_S \ \in M_n(\cK(H)^+)
\]
as a Bochner integral. Since $ss^*=1$ for every $s\in S_X$ and $\mu_S$ is a probability measure, we have $\int_{S_X} Tss^*\, d\mu_S=T$, and therefore
\begin{equation}\label{eq_expectation_estimate}
\|\cE(T)-T\|\le\sup_{s\in S_X} \|[T,s]\|.
\end{equation}
Averaging over the signs kills all off-diagonal matrix entries, so $\cE(T)$ is the block diagonal part of $T$. In particular $\cE(T)-\sigma(T)\in M_n\big(\bigoplus_{x\in X }\C\big)$, and since the diagonal entries of a compact operator vanish at infinity, we may regard $\cE(T)$ as an element of $M_n(C_0(X)^+)$, that is, as a matrix-valued function on $X$. We write $\cE(T)(x)\in M_n(\C)$ for its value at $x$. The map $\cE$ is linear, contractive, $*$-preserving and norm continuous, and $\cE(T)=T$ precisely when $T$ is diagonal.

Next, for $x\neq y$ in $X$ let $V_{xy}\in\cB(H)$ be the partial isometry
\[
V_{xy}h= \<h,\delta_x\rat\delta_y,\qquad h\in H,
\]
sending $\delta_x$ to $\delta_y$. It has norm one and propagation $d(x,y)$, so $V_{xy}\in E_r$ whenever $d(x,y)\le r$. If $T=(T_{xy})_{x,y\in X}\in M_n(\cK(H)^+)$ with $T_{xy}\in M_n(\C)$, a direct computation shows that the $(y,x)$ entry of $[T,V_{xy}]$ is $T_{yy}-T_{xx}$, and hence
\begin{equation}\label{eq_shift_estimate}
\|\cE(T)(y)-\cE(T)(x)\|\le \|[T,V_{xy}]\|.
\end{equation}
If $T$ is itself diagonal, then $[T,V_{xy}]$ has only this one nonzero entry, and \eqref{eq_shift_estimate} is an equality.

\begin{lemma}[Controlled diagonalization]\label{lem_controlled_diag}
Let $0<\alpha<1/100$, and let $F\subseteq E_r$ be a subset containing $S_X$.
\begin{propparts}
\item If $(p,q)\in\cP_{n,\alpha}(F)$, then there is a pair $(p^\flat,q^\flat)\in\cP_{n,20\alpha}(F)$ of projections in $M_n(C_0(X)^+)$ with $\sigma(p^\flat)=\sigma(p)$, $\sigma(q^\flat)=\sigma(q)$, $\|p^\flat-p\|<5\alpha$ and $\|q^\flat-q\|<5\alpha$, which is connected to $(p,q)$ by a path in $\cP_{n,20\alpha}(F)$ along which the scalar parts are constant. If $p$ (resp.\ $q$) is already diagonal, then $p^\flat=p$ (resp.\ $q^\flat=q$).
\item If $u\in\cU_{n,\alpha}(F)$, then there is a unitary $u^\flat\in\cU_{n,20\alpha}(F)\cap M_n(C_0(X)^+)$ with $\sigma(u^\flat)=\sigma(u)$ and $\|u^\flat-u\|<5\alpha$, which is connected to $u$ by a path in $\cU_{n,20\alpha}(F)$. If $u$ is already diagonal, then $u^\flat=u$.
\item The constructions in (1) and (2) are norm continuous. Consequently, if $(p_t,q_t)_{t\in[0,1]}$ is a path in $\cP_{n,\alpha}(F)$, then $(p_t^\flat,q_t^\flat)$ is a path in $\cP_{n,20\alpha}(F)$ consisting of diagonal pairs, and similarly for paths of unitaries.
\end{propparts}
\end{lemma}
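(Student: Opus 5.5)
The plan is to apply the expectation $\cE$ and then round off with the functional calculus of Proposition \ref{prop_almost_proj_unitary}, exactly as in Example \ref{exam_2_pts_quant_K}, with $S_X$ playing the role of $S$.

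For part (1), set $p'=\cE(p)$. Since $S_X\subseteq F$, the estimate \eqref{eq_expectation_estimate} gives $\|p'-p\|<\alpha$. Because $p'$ is self-adjoint, we get
\[
\|p'^2-p'\|\le\|p'(p'-p)\|+\|(p'-p)p\|+\|p'-p\|<3\alpha+\alpha^2<4\alpha/ \ldots
\]
so $p'$ is a $3\alpha$-almost projection, up to a harmless constant. Put $p^\flat=f(p')$ with $f$ as in Proposition \ref{prop_almost_proj_unitary}; the hypothesis $4\cdot 3\alpha<1/4$ holds since $\alpha<1/100$. This gives $\|p^\flat-p'\|<2\cdot 3\alpha$.

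We need the sharper bound $\|p^\flat-p\|<5\alpha$, so I will use the better spectral estimate. The distance from $p'$ to its rounding is $\frac{1-\sqrt{1-4\delta}}{2}\approx\delta$, where $\delta\approx 3\alpha$. Hence $\|p^\flat-p\|\lesssim 4\alpha$, and I expect this constant bookkeeping to be the only fiddly point.

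Next come the structural properties of $p^\flat$.
\begin{itemize}
\item Since $\cE$ maps into diagonal operators, which form the commutative $C^*$-algebra $M_n(C_0(X)^+)$ viewed as functions, $f(p')$ is again diagonal, i.e.\ a projection in $M_n(C_0(X)^+)$.
\item $\sigma(\cE(T))=\sigma(T)$, because scalars commute with signs, and $\sigma$ commutes with functional calculus. Hence $\sigma(p^\flat)=\sigma(p)$.
\item If $p$ is diagonal, then $\cE(p)=p$ and $f(p)=p$.
\end{itemize}

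The commutator bound is $\|[p^\flat,a]\|\le\|[p,a]\|+2\|p^\flat-p\|<\alpha+10\alpha<20\alpha$ for $a\in F$, using $\|a\|\le1$.

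For the path, use the linear path $p_t=(1-t)p+tp^\flat$ and round it, as in Corollary \ref{cor_norm_closed_proj_uni}. Since $\|p-p^\flat\|<5\alpha$, each $p_t$ is a $(25\alpha^2/4)$-almost projection. Then $f(p_t)$ is a projection with $\|f(p_t)-p_t\|<13\alpha^2$, so $\|[f(p_t),a]\|<\alpha+5\alpha+26\alpha^2<20\alpha$. Along this path the scalar part $\sigma(p_t)=\sigma(p)$ is constant, so $\sigma(f(p_t))$ is constant too, and condition (2) of Definition \ref{def_K_hom_K0} is preserved. The same construction applied to $q$ gives the path for $q$.

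For part (2), put $v=\cE(u)$, which is diagonal with $\|v-u\|<\alpha$. Then $v$ is invertible and
\[
\|1-v^*v\|\le\|v^*(u-v)\|+\|(v-u)^*u\|<2\alpha+\alpha^2.
\]
Define $u^\flat=v(v^*v)^{-1/2}$. This is diagonal, since it lies in the commutative algebra generated by $v$, $v^*$ and $1$. Proposition \ref{prop_almost_proj_unitary} then gives $\|u^\flat-u\|<\alpha+2(2\alpha+\alpha^2)<5\alpha+\ldots$; this is again slightly tight, and I would sharpen it using $\|v\|\le1$, because $\cE$ is contractive. The commutators for $u^\flat$ and $(u^\flat)^*$ are bounded by $\alpha+10\alpha$. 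The path is obtained by polar decomposition along $(1-t)u+tu^\flat$, exactly as in Corollary \ref{cor_norm_closed_proj_uni}. If $u$ is already diagonal, then $v=u$ and $u^\flat=u$.

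Part (3) is continuity. $\cE$ is norm continuous, since it is contractive and linear. The functional calculus $f$ is norm continuous on self-adjoint elements whose spectra avoid the gap where $f$ is non-constant, because there $f$ agrees with a fixed continuous function. Inversion and square roots are continuous on invertible positive elements. Hence $t\mapsto(p_t^\flat,q_t^\flat)$ is continuous, and the pointwise estimates from parts (1) and (2) place it in $\cP_{n,20\alpha}(F)$ with diagonal entries. The case of paths of unitaries is the same.
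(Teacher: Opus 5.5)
Your proposal is correct and follows the paper's proof essentially step for step: apply $\cE$, round off by functional calculus (resp.\ the polar part), re-round the linear path as in Corollary \ref{cor_norm_closed_proj_uni}, and get (3) from norm continuity of $\cE$, of the functional calculus and of the polar part. The one real difference is bookkeeping. The paper writes $p'^2-p'=(p'-p)p'+(p-I)(p'-p)$ to get a $2\alpha$-almost projection, so the crude $2\eps$ bound of Proposition \ref{prop_almost_proj_unitary} already gives $\|p^\flat-p\|<5\alpha$. Your $3\alpha$ estimate (with $\|p'\|\le 1$) instead needs the sharp spectral bound $\frac{1-\sqrt{1-12\alpha}}{2}<4\alpha$, which does hold for $\alpha<1/16$. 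Two small slips do not affect the argument. Neither $M_n(C_0(X)^+)$ nor $C^*(v,1)$ is commutative in general, but you only need that they are unital $C^*$-subalgebras. The path commutator bound is $11\alpha+26\alpha^2$ rather than $6\alpha+26\alpha^2$, which is still below $20\alpha$.
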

\begin{proof}
For (1), put $p'=\cE(p)$. By \eqref{eq_expectation_estimate} and $S_X\subseteq F$ we have $\|p'-p\|\le\sup_{s\in S_X}\|[p,s]\|<\alpha$, where the last inequality is strict because $S_X$ is compact and $s\mapsto\|[p,s]\|$ is norm continuous, so that the supremum is attained; moreover $p'$ is self-adjoint with $\sigma(p')=\sigma(p)$. Writing 
$$p'^2-p'=(p'-p)p'+(p-I)(p'-p),$$
we get $\|p'^2-p'\|<2\alpha<1/4$. Let $f$ be as in Proposition \ref{prop_almost_proj_unitary}(1) for $\varepsilon=2\alpha$, and put $p^\flat:=f(p')$. Then $p^\flat$ is a projection with $\|p^\flat-p'\|<4\alpha$, hence $\|p^\flat-p\|<5\alpha$. Since $p'$ is diagonal, so is $p^\flat$, that is, $p^\flat\in M_n(C_0(X)^+)$, and $\sigma(p^\flat)=f(\sigma(p))=\sigma(p)$ because $\sigma(p)$ is a projection. For $a\in F$ we have
\[
\|[p^\flat,a]\|\le\|[p,a]\|+2\|p^\flat-p\|<11\alpha.
\]
Now consider the linear path $v_t=(1-t)p+tp^\flat$, $t\in[0,1]$. Since $p$ and $p^\flat$ are projections, $v_t^2-v_t=-t(1-t)(p-p^\flat)^2$, so that $\|v_t^2-v_t\|\le\frac14\|p-p^\flat\|^2<7\alpha^2$, while $\|[v_t,a]\|<11\alpha$ by convexity and $\sigma(v_t)=\sigma(p)$. By Proposition \ref{prop_almost_proj_unitary}(1), $t\mapsto f(v_t)$ is a norm continuous path of projections with $\|f(v_t)-v_t\|<14\alpha^2$, $f(v_0)=p$, $f(v_1)=p^\flat$, $\sigma(f(v_t))=\sigma(p)$, and
\[
\|[f(v_t),a]\|\le\|[v_t,a]\|+2\|f(v_t)-v_t\|<11\alpha+28\alpha^2<12\alpha
\]
for all $a\in F$, using $\alpha<1/100$. Doing the same for $q$ gives the path required in (1). If $p$ is diagonal, then $p'=p$ and $p^\flat=f(p)=p$.

For (2), put $v=\cE(u)$, so that $\|v-u\|<\alpha$ as in (1), and $\sigma(v)=\sigma(u)$. Then $v=u\big(1+u^*(v-u)\big)$ is invertible, and
\[
\|1-v^*v\|=\|u^*u-v^*v\|\le\|u^*(u-v)\|+\|(u-v)^*v\|<2\alpha<1/4,
\]
so by Proposition \ref{prop_almost_proj_unitary}(2) the polar part $u^\flat:=v(v^*v)^{-1/2}$ is a unitary with $\|u^\flat-v\|<4\alpha$, hence $\|u^\flat-u\|<5\alpha$. Since $v$ is diagonal, so is $u^\flat$, and $\sigma(u^\flat)$ is the polar part of $\sigma(v)=\sigma(u)$, which is $\sigma(u)$ itself since $\sigma(u)$ is unitary. For $a\in F$ we get $\|[u^\flat,a]\|<11\alpha$ and $\|[(u^\flat)^*,a]\|<11\alpha$ as in (1). Consider the linear path $w_t=(1-t)u+tu^\flat$. As $\|w_t-u\|\le\|u^\flat-u\|<1$, each $w_t$ is invertible, and the computation in the proof of Corollary \ref{cor_norm_closed_proj_uni} gives $1-w_t^*w_t=t(1-t)(u-u^\flat)^*(u-u^\flat)$, so that $\|1-w_t^*w_t\|<7\alpha^2$. By Proposition \ref{prop_almost_proj_unitary}(2), the polar parts $w_t(w_t^*w_t)^{-1/2}$ form a norm continuous path of unitaries from $u$ to $u^\flat$ within $14\alpha^2$ of $w_t$, and hence, as in (1), almost commuting with every $a\in F$ at level $11\alpha+28\alpha^2<12\alpha$, together with their adjoints. If $u$ is diagonal, then $v=u$ and $u^\flat=u$.

For (3), note that $\cE$ is norm continuous, that the continuous functional calculus $T\mapsto f(T)$ is norm continuous on self-adjoint elements, and that the polar part is norm continuous on invertible elements.
\end{proof}

\subsection{Lipschitz control from shifts}
\begin{lemma}[Lipschitz control from shifts]\label{lem_shift_lipschitz}
Suppose that $X$ is $(m,C)$-chain geodesic, let $\alpha>0$, and let $g\in M_n(C_0(X)^+)$ satisfy $\|[g,V_{xy}]\|<\alpha$ whenever $0<d(x,y)\le m$. Then $g$ is $C\alpha$-Lipschitz.
\end{lemma}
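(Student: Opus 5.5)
Since $g\in M_n(C_0(X)^+)$ is a diagonal operator, the plan is to use the equality case of \eqref{eq_shift_estimate}: for a diagonal $T$, the commutator $[T,V_{xy}]$ has a single nonzero entry, and that entry equals $T_{yy}-T_{xx}$. Hence for $0<d(x,y)\le m$ we get
\[
\|g(y)-g(x)\|=\|[g,V_{xy}]\|<\alpha .
\]
This is the local bound. We will verify the entry computation directly: $V_{xy}$ is the matrix unit $\delta_x\mapsto\delta_y$, so $gV_{xy}-V_{xy}g$ is the matrix unit times $g(y)-g(x)$. Its operator norm on $\ell^2(X)\otimes\C^n$ is then exactly $\|g(y)-g(x)\|_{M_n(\C)}$.

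Next we pass from local to global control using the chain geodesic hypothesis. Fix distinct points $x,y\in X$ (the case $x=y$ is trivial) and choose a chain $x=x_0,x_1,\dots,x_N=y$ with $d(x_i,x_{i+1})\le m$ and $N\le C\,d(x,y)$. Consecutive points that coincide contribute nothing, so we may discard repetitions; each remaining step then satisfies $0<d(x_i,x_{i+1})\le m$, and the local bound applies to it. The triangle inequality gives
\[
\|g(x)-g(y)\|\le\sum_{i}\|g(x_{i+1})-g(x_i)\|< N\alpha\le C\alpha\, d(x,y).
\]
This is exactly the $C\alpha$-Lipschitz condition of Definition \ref{def_Lip_proj_uni}.

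No step here is a serious obstacle. The only points needing care are two. First, the scalar part of $g$ cancels in the commutator, so the identification of $[g,V_{xy}]$ with a single matrix entry holds for elements of $M_n(C_0(X)^+)$ and not only for $M_n(C_0(X))$. Second, we must handle possibly repeated points in the chain, as above.
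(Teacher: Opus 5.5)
Your proof is correct and follows the paper's argument: first the local bound $\|g(x)-g(y)\|<\alpha$ for $0<d(x,y)\le m$, coming from the shift estimate for diagonal operators, and then the triangle inequality along a chain of at most $C\,d(x,y)$ steps. Your explicit handling of repeated consecutive points in the chain is a small piece of extra care that the paper leaves implicit.
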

\begin{proof}
Since $g$ is diagonal, \eqref{eq_shift_estimate} gives $\|g(x)-g(y)\|<\alpha$ whenever $0<d(x,y)\le m$. Given $x\neq y$ in $X$, choose a sequence $x=x_0,x_1,\dots,x_N=y$ with $d(x_i,x_{i+1})\le m$ and $N\le Cd(x,y)$. Then 
\[
\|g(x)-g(y)\|\le\sum_{i=0}^{N-1}\|g(x_i)-g(x_{i+1})\|\le N\alpha\le C\alpha\, d(x,y). \qedhere
\]

\end{proof}

\subsection{Proof of Theorem \ref{thm_main_theorem}}
\begin{proof}[Proof of Theorem \ref{thm_main_theorem}]
Put $\cV_m=\Set{V_{xy}\mid 0<d(x,y)\le m}$. Since $m\le r$, we have $S_X\cup\cV_m\subseteq E_r$. With $M=80Cr\sqrt{N_X(r)}$ and $L'=L/M$ as in the statement, set
\[
\eps:=\Omega_X(r,L')=4rL'\sqrt{N_X(r)}=\frac{L}{20C},
\]
so that $L=20C\eps$ and $\Omega_X(r,L)=M\eps$. Note that $M\ge 80$, so $L'\le L$ and the forget-control map $i^{L',L}$ exists, $20\eps\le \Omega_X(r,L)$, and $\eps<1/(100M)<1/8000$.

\emph{Even degree, surjectivity.} Let $y$ be in the image of $\phi\colon K^0_{\eps}(E_r)\to K^0_{\Omega_X(r,L)}(E_r)$. By Lemma \ref{lem_element_to_[p,e_n]}, $y$ has a preimage of the form $[p,1_n\oplus 0_n]$ with $(p,1_n\oplus 0_n)\in \cP_{2n,\eps}(E_r)$ and $\sigma(p)=1_n\oplus 0_n$. By Lemma \ref{lem_controlled_diag}(1), $(p,1_n\oplus 0_n)$ is connected in $\cP_{2n,20\eps}(E_r)$ to $(p^\flat,1_n\oplus 0_n)$, where $p^\flat\in M_{2n}(C_0(X)^+)$ is a projection with $\sigma(p^\flat)=1_n\oplus 0_n$. Since $p^\flat$ almost commutes with $\cV_m$ at level $20\eps$, Lemma \ref{lem_shift_lipschitz} shows that $p^\flat$ is $20C\eps=L$-Lipschitz, so $z:=[p^\flat,n]\in K^L_0(C_0(X))$. As $20\eps\le \Omega_X(r,L)$, the classes $[p,1_n\oplus 0_n]$ and $[p^\flat,1_n\oplus 0_n]=\nu_0(z)$ have the same image in $K^0_{\Omega_X(r,L)}(E_r)$, that is, $\nu_0(z)=y$.

\emph{Even degree, injectivity.} Let $z\in K^{L'}_0(C_0(X))$ satisfy $\nu_0(z)=0$ in $K_{\eps}^0(E_r)$. By Lemma \ref{lem_LipK_prescribe}, $z=[p,n]$ with $p\in P^{L'}_{2n}(X)$ and $\sigma(p)=1_n\oplus 0_n$, so that $\nu_0(z)=[p,1_n\oplus 0_n]$ with $(p,1_n\oplus 0_n)\in\cP_{2n,\eps}(E_r)$. By Lemma \ref{lem_K0_element_to_be_zero}, there are $k\in\N$ and a path $(P_t,Q_t)_{t\in[0,1]}$ in $\cP_{2n+2k,\eps}(E_r)$ from $(p\oplus 1_k\oplus 0_k,\ 1_n\oplus 0_n\oplus 1_k\oplus 0_k)$ to a pair of the form $(s,s)$. Applying Lemma \ref{lem_controlled_diag} along this path yields a path $(P_t^\flat,Q_t^\flat)$ of diagonal projection pairs in $\cP_{2n+2k,20\eps}(E_r)$, with the same initial point because $p\oplus 1_k\oplus 0_k$ and $1_n\oplus 0_n\oplus 1_k\oplus 0_k$ are already diagonal, and with $P_1^\flat=s^\flat=Q_1^\flat$. Following $P_t^\flat$ and then $Q_t^\flat$ in reverse gives a norm continuous path of projections in $M_{2n+2k}(C_0(X)^+)$ from $p\oplus 1_k\oplus 0_k$ to $1_n\oplus 0_n\oplus 1_k\oplus 0_k$, each of which almost commutes with $\cV_m$ at level $20\eps$ and is therefore $L$-Lipschitz by Lemma \ref{lem_shift_lipschitz}. Hence $p\oplus 1_k\oplus 0_k$ and $1_n\oplus 0_n\oplus 1_k\oplus 0_k$ are homotopic in $P^{L}_{2n+2k}(X)\subseteq P^{2L}(X)$, so that $[p\oplus 1_k\oplus 0_k,n+k]=[1_n\oplus 0_n\oplus 1_k\oplus 0_k,n+k]$ in $K^L_0(C_0(X))$. Both $[1_k\oplus 0_k,k]$ and $[1_n\oplus 0_n\oplus 1_k\oplus 0_k,n+k]$ are zero in $K^L_0(C_0(X))$, being classes of scalar projections, and therefore
\[
i^{L',L}(z)=[p,n]=[p\oplus 1_k\oplus 0_k,n+k]-[1_k\oplus 0_k,k]=0.
\]

\emph{Odd degree.} For surjectivity, let $y$ be in the image of $K^1_{\eps}(E_r)\to K^1_{\Omega_X(r,L)}(E_r)$, represented by some $u\in\cU_{n,\eps}(E_r)$, which by Lemma \ref{lem_precribe_scalar_P1_U1}(3) we may take with $\sigma(u)=1_n$. By Lemma \ref{lem_controlled_diag}(2), $u$ is connected in $\cU_{n,20\eps}(E_r)$ to a diagonal unitary $u^\flat$ with $\sigma(u^\flat)=1_n$, and $u^\flat$ is $L$-Lipschitz by Lemma \ref{lem_shift_lipschitz}. Hence $u^\flat\in U^L_n(X)$. Since $20\eps\le\Omega_X(r,L)$ we have $u^\flat\in\cU_{n,\Omega_X(r,L)}(E_r)$, and the connecting path lies in $\cU_{n,2\Omega_X(r,L)}(E_r)$, so $\nu_1([u^\flat])=y$.

For injectivity, let $z=[u]\in K^{L'}_1(C_0(X))$ with $u\in U^{L'}_n(X)$, and suppose that $\nu_1(z)=0$ in $K^1_{\eps}(E_r)$. By definition of $K^1_\eps(E_r)$, after stabilizing there is a path $(u_t)_{t\in[0,1]}$ in $\cU_{n+k,2\eps}(E_r)$ from $u\oplus 1_k$ to $1_{n+k}$. Applying Lemma \ref{lem_controlled_diag}(2) with $\alpha=2\eps<1/100$ along this path yields a path $(u_t^\flat)$ of diagonal unitaries in $\cU_{n+k,40\eps}(E_r)$ from $u\oplus 1_k$ to $1_{n+k}$, both endpoints being diagonal already. By Lemma \ref{lem_shift_lipschitz}, each $u_t^\flat$ is $40C\eps=2L$-Lipschitz. Thus $u\oplus 1_k$ is homotopic to $1_{n+k}$ in $U^{2L}(X)$, which means precisely that $i^{L',L}(z)=[u]=0$ in $K^L_1(C_0(X))$. Note that the factor $2$ in the definition of the odd groups is what allows the same constant $M$ to work in both degrees.
\end{proof}

\section{Pairing with quantitative \textit{K}-theory}
In this section, we establish a pairing in degree $0$ between quantitative $K$-homology and the quantitative $K$-theory of $C_u^*(X)$, similar to the difference construction in \cite{GuoYu2023Covering}. Recall that the quantitative $K$-theory of the uniform Roe algebra is defined using quasi-idempotents. 

\subsection{Quantitative \textit{K}-theory}
\begin{define}
\label{def_quantitative_K}
Let $0<\varepsilon<\frac{1}{20}$, $r>0$, and $N\geq 1$. An element $e=(e_{ij})\in M_n(C^*_u(X))$ is called an \emph{$(\varepsilon,r,N)$-quasi-idempotent} if
$$
\|{e^2-e}\|<\varepsilon,\qquad e_{ij}\in \C^*_u[X]_r,\qquad\max(\|{e}\|,\|{1-e}\|)\leq N.
$$
Let $\textnormal{Idem}_n^{\varepsilon,r,N}(C^*_u(X))$ be the subspace of $(\varepsilon,r,N)$-quasi-idempotents in $M_n(C^*_u(X))$. For each positive integer $n$ there is an injection 
$$\textnormal{Idem}^{\varepsilon,r,N}_n(C^*_u(X))\hookrightarrow\textnormal{Idem}^{\varepsilon,r,N}_{n+1}(C^*_u(X))
$$ 
given by inclusion into the upper-left corner, with respect to which we define 
$$
\textnormal{Idem}_\infty^{\varepsilon,r,N}(C^*_u(X))=\varinjlim \textnormal{Idem}^{\varepsilon,r,N}_n(C^*_u(X)),
$$
where the direct limit is taken in the category of topological spaces.
Define an equivalence relation $\sim$ on $\textnormal{Idem}_\infty^{\varepsilon,r,N}(C^*_u(X))$ by declaring $e\sim f$ if $e$ and $f$ are homotopic through $(4\varepsilon,r,4N)$-quasi-idempotents. Denote the equivalence class of an element $e\in\textnormal{Idem}_\infty^{\varepsilon,r,N}(C^*_u(X))$ by $[e]$, and define
$$
[e]+[f]=\left[\begin{pmatrix}e&0\\0&f\end{pmatrix}\right].
$$
Under this operation, $\textnormal{Idem}_\infty^{\varepsilon,r,N}(C^*_u(X))/\sim$ becomes an abelian monoid with identity $[0]$. Let $K_0^{\varepsilon,r,N}(C^*_u(X))$ be its Grothendieck completion. 
\end{define}

\subsection{The pairing}
Throughout this section, $X$ is a discrete proper metric space with bounded geometry, $r>0$, and $E_r$ is the unit ball of $\C_u^*[X]_r$. The chain geodesic hypothesis is not needed for the pairing. The key input is the following commutator estimate, which is independent of the sizes of the matrices involved; this uniformity is what the full test set $E_r$ provides.

\begin{lemma}\label{lem_pairing_commutator}
Let $\eps>0$ and $N\ge 1$. Let $p\in M_n(\cK(H)^+)$ be a projection with $\|[p,a]\|<\eps$ for all $a\in S_X\cup \set{V_{xy}\mid 0<d(x,y)\le r}$ (for instance, for all $a\in E_r$), and let $e\in M_k(\C_u^*[X]_r)$ with $\|e\|\le N$. Then, with the natural amplifications,
\[
\|[p\otimes I_k,\ I_n\otimes e]\|\le \big(N_X(r)+2\big)N\eps\le 3N_X(r)N\eps.
\]
\end{lemma}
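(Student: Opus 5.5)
The plan is to split $p$ into its diagonal part and a small remainder, using the expectation $\cE$ and the two estimates \eqref{eq_expectation_estimate} and \eqref{eq_shift_estimate}. Write $D:=\cE(p)\in M_n(C_0(X)^+)$. Since $S_X$ is among the test operators, \eqref{eq_expectation_estimate} gives $\|p-D\|\le\sup_{s\in S_X}\|[p,s]\|<\eps$. As in the proof of Lemma \ref{lem_controlled_diag}, the supremum is attained by compactness of $S_X$, so the inequality is strict. We then decompose
\[
[p\otimes I_k,\ I_n\otimes e]=[(p-D)\otimes I_k,\ I_n\otimes e]+[D\otimes I_k,\ I_n\otimes e].
\]
The first term is bounded trivially by $2\|p-D\|\,\|e\|\le 2N\eps$. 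This accounts for the ``$+2$'' in the constant.

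For the diagonal term, regard everything as an $X$-by-$X$ matrix with entries in $M_n(\C)\otimes M_k(\C)$. Let $e_{xy}\in M_k(\C)$ be the $(x,y)$ block of $e$, obtained by taking the $(x,y)$ entry of each $e_{ij}\in\C^*_u[X]_r$. Since $D$ is diagonal with values $D(x)$, the $(x,y)$ entry of the commutator is
\[
(D(x)\otimes I_k)(I_n\otimes e_{xy})-(I_n\otimes e_{xy})(D(y)\otimes I_k)=(D(x)-D(y))\otimes e_{xy}.
\]
Its norm is $\|D(x)-D(y)\|\,\|e_{xy}\|$. We have $\|e_{xy}\|\le\|e\|\le N$, because $e_{xy}$ is a compression of $e$. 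Moreover, for $0<d(x,y)\le r$, \eqref{eq_shift_estimate} gives $\|D(x)-D(y)\|\le\|[p,V_{xy}]\|<\eps$, and the diagonal entries vanish. The entry is zero whenever $d(x,y)>r$, since each $e_{ij}$ has propagation at most $r$. Hence each row and each column has at most $N_X(r)$ nonzero entries, each of norm less than $N\eps$. The Schur test \eqref{eq_schur}, applied with $M_{nk}(\C)$-valued entries, then bounds this term by $N_X(r)N\eps$.

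Adding the two bounds gives $(N_X(r)+2)N\eps$. The final inequality $(N_X(r)+2)\le 3N_X(r)$ holds because $N_X(r)\ge1$.

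The only point requiring care is that the bound must not depend on $n$ or $k$. This is exactly why the proof uses the block-valued Schur test \eqref{eq_schur} together with the operator-norm identity $\|A\otimes B\|=\|A\|\|B\|$, rather than estimating scalar entries, which is the gap flagged in Remark \ref{rem_GY_gap}. The scalar part $\sigma(p)$ causes no trouble: it lies in $D$, contributes nothing to $D(x)-D(y)$, and leaves $p-D$ unaffected.
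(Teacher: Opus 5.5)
Your proposal is correct and follows the paper's proof essentially step for step. You split $p$ into $\cE(p)$ and a remainder, bound the remainder term by $2N\eps$, and apply the block-valued Schur test to the diagonal term with $(x,y)$ entry $(\cE(p)(x)-\cE(p)(y))\otimes e_{xy}$; your extra remarks (strictness via compactness of $S_X$, vanishing diagonal entries) are harmless refinements not needed for the stated bound.
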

\begin{proof}
Let $\cE$ be the expectation of Section \ref{section_general_proof} and put $p'=\cE(p)\in M_n(C_0(X)^+)$. By \eqref{eq_expectation_estimate} we have $\|p-p'\|\le\eps$, and by \eqref{eq_shift_estimate} we have $\|p'(x)-p'(y)\|<\eps$ whenever $0<d(x,y)\le r$. Write $e=(e_{xy})_{x,y\in X}$ with $e_{xy}\in M_k(\C)$, so that $e_{xy}=0$ when $d(x,y)>r$ and $\|e_{xy}\|\le N$. The $(x,y)$ entry of $[p'\otimes I_k, I_n\otimes e]$ is $(p'(x)-p'(y))\otimes e_{xy}$, which has norm at most $N\eps$, and each row and each column contains at most $N_X(r)$ nonzero entries. The Schur test \eqref{eq_schur} therefore gives $\|[p'\otimes I_k,I_n\otimes e]\|\le N_X(r)N\eps$. Finally $\|[(p-p')\otimes I_k, I_n\otimes e]\|\le 2\|p-p'\|\,\|e\|\le 2N\eps$, and the triangle inequality gives the claim.
\end{proof}

An element $a$ of a $C^*$-algebra $A$ is a generalized idempotent if $\spec(a)$ is contained in the union of $\{\Re z>1/2\}$ and $\{\Re z<1/2\}$. For such $a$, the holomorphic functional calculus $h(a)$ with respect to the function $h$ below produces an actual idempotent; see \cite[Lemma 11.2.9]{rordam2000introduction}. 
\begin{equation*}
    h(z)=
    \begin{cases}
    0, & z\in \{\Re z<1/2\},
    \\ 1, & z\in \{\Re z>1/2\}.
    \end{cases}
\end{equation*}
We shall use the following criterion: if $\|a^2-a\|<1/4$, then $a$ is a generalized idempotent. Indeed, if $\lambda=\frac12+it\in\spec(a)$, then $\lambda^2-\lambda=-(\frac14+t^2)\in\spec(a^2-a)$ would have modulus at least $1/4$. Moreover, $h$ is norm continuous on the set of generalized idempotents, and preserves direct sums.

We now define the pairing on representatives. Let $p,q\in M_l(\cK(H)^+)$ be projections with $\sigma(p)=\sigma(q)$, so that $p-q\in M_l(\cK(H))$, and let $e\in M_k(\C_u^*[X]_r)$. Define
\[
J_{p,q}(e)=((p-q)\otimes I_k)(I_{l}\otimes e)+q\otimes I_k \ \in M_{lk}(\cK(H)^+),
\]
which satisfies $\sigma(J_{p,q}(e))=\sigma(q)\otimes I_k$. For a normalized pair $(p,1_n\oplus 0_n)$ as in Lemma \ref{lem_element_to_[p,e_n]} we write $q_n=1_n\oplus 0_n$ and $I_p(e)=J_{p,q_n}(e)$. Suppressing the amplifications, a direct computation using $p^2=p$ and $q^2=q$ gives
\[
J_{p,q}(e)^2-J_{p,q}(e)=(p-q)^2(e^2-e)-(p-q)[p,e]e-(p-q)[q,e](1-e).
\]
Since $\|p-q\|\le 1$ for projections $p,q$, Lemma \ref{lem_pairing_commutator} yields the following estimate: if $(p,q)\in\cP_{l,\eps}(E_r)$ with $\sigma(p)=\sigma(q)$, and $e$ satisfies $\|e^2-e\|<\delta$ and $\max(\|e\|,\|1-e\|)\le N$, then
\begin{equation}\label{eq_J_estimate}
\|J_{p,q}(e)^2-J_{p,q}(e)\|< \delta+6N_X(r)N^2\eps.
\end{equation}
For an $(\eps,r,N)$-quasi-idempotent $e$ we take $\delta=\eps$, and for the $(4\eps,r,4N)$-quasi-idempotents occurring in the homotopies of Definition \ref{def_quantitative_K} we take $\delta=4\eps$ and replace $N$ by $4N$. In both cases, as $N_X(r),N\ge 1$,
\[
\|J_{p,q}(e)^2-J_{p,q}(e)\|< 4\eps+96N_X(r)N^2\eps\le 100N_X(r)N^2\eps.
\]
Hence, under the standing assumption
\begin{equation}\label{eq_pairing_hypothesis}
N^2\eps<\frac{1}{10^5N_X(r)},
\end{equation}
every such $J_{p,q}(e)$ is a generalized idempotent, $h(J_{p,q}(e))$ is an idempotent in $M_{lk}(\cK(H)^+)$ with scalar part $\sigma(q)\otimes I_k$, and we may define
\[
D_{p,q}(e)=[h(J_{p,q}(e))]-[q\otimes I_k] \ \in K_0(\cK(H))\cong\Z,
\]
where the difference lies in $K_0(\cK(H))=\ker\big(K_0(\cK(H)^+)\to K_0(\C)\big)$ because both idempotents have the same scalar part. We write $D_p(e)=D_{p,q_n}(e)$ for normalized pairs.

\begin{define}\label{def_pairing}
Under the assumption \eqref{eq_pairing_hypothesis}, define a pairing between $K^0_\eps(E_r)$ and $K_0^{\eps,r,N}(C_u^*(X))$ by  
\[ 
\langle[p,1_n\oplus 0_n],[e]-[f]\rangle=D_p(e)-D_p(f)\in K_0(\cK(H))\cong\Z. 
\]
\end{define}

\begin{prop}\label{prop_pairing_well_defined}
     Under the assumption \eqref{eq_pairing_hypothesis}, the above pairing is well defined and bi-additive. 
\end{prop}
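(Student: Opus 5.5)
To prove the proposition, I will first show that $D_{p,q}(e)$ depends only on the homotopy classes involved, and then deduce well-definedness from the standard fact that $K_0(\cK(H))$ is a discrete group. The basic principle is this: if $(p_t,q_t)$ is a path in $\cP_{l,\eps}(E_r)$ with $\sigma(p_t)=\sigma(q_t)$, and $e_t$ is a path of $(4\eps,r,4N)$-quasi-idempotents, then $t\mapsto J_{p_t,q_t}(e_t)$ is norm continuous. By \eqref{eq_J_estimate} and \eqref{eq_pairing_hypothesis}, it stays inside the generalized idempotents. Since $h$ is norm continuous there, $h(J_{p_t,q_t}(e_t))$ is a continuous path of idempotents, with scalar part $\sigma(q_t)\otimes I_k$. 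Hence the class $[h(J_{p_t,q_t}(e_t))]-[q_t\otimes I_k]\in K_0(\cK(H))$ is locally constant in $t$, and therefore constant.

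Applying this principle to a homotopy of $e$ through $(4\eps,r,4N)$-quasi-idempotents shows that $D_p(e)$ depends only on the class of $e$ in the monoid. Next I will check additivity in $e$. Up to a permutation of coordinates by a scalar unitary, we have $J_{p,q}(e\oplus f)=J_{p,q}(e)\oplus J_{p,q}(f)$, and $h$ preserves direct sums, so $D_p(e\oplus f)=D_p(e)+D_p(f)$. By the universal property of the Grothendieck completion, $[e]-[f]\mapsto D_p(e)-D_p(f)$ is then a well-defined homomorphism on $K_0^{\eps,r,N}(C_u^*(X))$. Stabilization in $e$ by zero blocks is harmless: $J_{p,q}(0)=q\otimes I$ contributes nothing.

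For the first variable, I will work with general pairs through $D_{p,q}$, using Lemma \ref{lem_element_to_[p,e_n]} only to identify the normalized representatives. Three things have to be checked.
\begin{enumerate}
\item Invariance under paths in $\cP_{\infty,\eps}(E_r)$. This is the principle above, and it requires $\sigma(p_t)=\sigma(q_t)$; by Lemma \ref{lem_precribe_scalar_P1_U1}(2) I can arrange this along the path, after conjugating by scalar unitaries, which does not change $D$.
\item Invariance under stabilization $(p\oplus p',q\oplus p')$. Here $J$ splits as $J_{p,q}(e)\oplus (p'\otimes I_k)$, which contributes $[p'\otimes I_k]-[p'\otimes I_k]=0$.
\item Additivity. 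This follows by the same block-sum observation.
\end{enumerate}
It remains to relate $D_{p,q}$ to $D_p$. For this I will use the rotation homotopy in the proof of Lemma \ref{lem_element_to_[p,e_n]}: it passes from $(p,q)$ to $(p\oplus(1-q),1\oplus 0)$ through pairs in $\cP_{\infty,\eps}(E_r)$, so the value of the pairing is preserved. Combining these facts, $D_p(e)$ depends only on $[p,1_n\oplus 0_n]\in K^0_\eps(E_r)$, and the pairing is additive in this variable.

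The main obstacle is keeping every intermediate operator within the regime where \eqref{eq_J_estimate} applies. For the second variable, the homotopies run through $(4\eps,r,4N)$-quasi-idempotents, and the bound $100N_X(r)N^2\eps<1/4$ has already been arranged for this. For the first variable, the paths in $K^0_\eps(E_r)$ stay at commutator level $\eps$, and Lemma \ref{lem_pairing_commutator} gives bounds independent of the matrix sizes $l$ and $k$. That size-independence is exactly what is needed, since stabilization enlarges the matrices without bound. I will also check that the entries of $e$ have propagation at most $r$ throughout, so that Lemma \ref{lem_pairing_commutator} applies at every stage, including after the scalar unitary conjugations.
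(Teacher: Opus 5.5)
Your individual checks match the paper's: invariance of $D_{p,q}(e)$ under homotopies and direct sums of $e$, followed by the universal property of the Grothendieck completion; constancy of $h(J_{p_t,q_t}(e))$ along paths with $\sigma(p_t)=\sigma(q_t)$; the splitting $J_{p\oplus s,q\oplus s}(e)=J_{p,q}(e)\oplus(s\otimes I_k)$; and invariance under simultaneous scalar conjugation combined with Lemma~\ref{lem_precribe_scalar_P1_U1}(2).

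The gap is in the step ``combining these facts''. The relation on $\cP_{\infty,\eps}(E_r)$ is the one \emph{generated} by paths and stabilizations. So two normalized representatives $(p,q_n)$ and $(\wt p,q_m)$ of the same class are a priori joined only by a chain of elementary moves. The intermediate pairs $(p',q')$ of that chain satisfy merely $[\sigma(p')]=[\sigma(q')]$, and at such pairs $D_{p',q'}$ is not defined. Your path step only covers paths whose endpoints already have equal scalar parts, which is the hypothesis of Lemma~\ref{lem_precribe_scalar_P1_U1}(2). Hence the value of $D$ cannot be propagated along the chain as you describe.

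The rotation homotopy from Lemma~\ref{lem_element_to_[p,e_n]} does not repair this. Along it the scalar part of the second entry rotates while that of the first stays fixed. Its endpoint $(p\oplus(1-q),1_n\oplus 0_n)$ has equal scalar parts only when $\sigma(q)=1_n$, so ``the value of the pairing is preserved'' is not justified there. The step is also unnecessary: a normalized pair already has equal scalar parts, and $D_p=D_{p,q_n}$ by definition.

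The missing idea, which is how the paper concludes, is a normal form for the generated relation. Consider the relation ``$x$ and $y$ become path connected after stabilizing each of them''. It contains both generating relations and is transitive, since stabilizations can be combined and direct summands reordered by scalar rotations, which preserve commutator norms. So it coincides with $\sim$. Hence there are projections $s,\wt s$ such that $(p\oplus s,q_n\oplus s)$ and $(\wt p\oplus\wt s,q_m\oplus\wt s)$ are joined by a \emph{single} path. Both endpoints have equal scalar parts, so after scalar conjugation Lemma~\ref{lem_precribe_scalar_P1_U1}(2) applies, and your stabilization and path steps give $D_p(e)=D_{\wt p}(e)$.

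Alternatively, you could extend $D$ to all pairs by setting $D_{p,q}:=D_{wpw^*,q}$ for a scalar unitary $w$ with $w\sigma(p)w^*=\sigma(q)$.
\begin{itemize}
\item Independence of $w$ holds because the unitaries commuting with $\sigma(p)$ form a connected group.
\item Invariance under arbitrary paths follows by lifting $\sigma(p_t)$ and $\sigma(q_t)$ to unitary conjugation paths, as in the proof of Lemma~\ref{lem_precribe_scalar_P1_U1}(2).
\end{itemize}
Once either argument is supplied, the rest of your proposal goes through, including additivity in the first variable.
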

\begin{proof}
    We verify in turn that $D_{p,q}(e)$ respects each of the equivalences involved.

    \emph{Homotopies and direct sums of quasi-idempotents.} Fix $(p,q)\in\cP_{l,\eps}(E_r)$ with $\sigma(p)=\sigma(q)$. If $(e_t)_{t\in[0,1]}$ is a path of $(4\eps,r,4N)$-quasi-idempotents, then by \eqref{eq_J_estimate} $t\mapsto J_{p,q}(e_t)$ is a norm continuous path of generalized idempotents, so $t\mapsto h(J_{p,q}(e_t))$ is a norm continuous path of idempotents in $M_{lk}(\cK(H)^+)$ and $D_{p,q}(e_0)=D_{p,q}(e_1)$. Next, $J_{p,q}(e_1\oplus e_2)$ is conjugate to $J_{p,q}(e_1)\oplus J_{p,q}(e_2)$ by a permutation of coordinates, which is a scalar unitary, and $h$ preserves direct sums; hence $D_{p,q}(e_1\oplus e_2)=D_{p,q}(e_1)+D_{p,q}(e_2)$. Also $J_{p,q}(0)=q\otimes I_k$ is a projection, so $h(J_{p,q}(0))=q\otimes I_k$ and $D_{p,q}(0)=0$; in particular $D_{p,q}(e\oplus 0)=D_{p,q}(e)$. Thus $D_{p,q}$ is a well-defined additive map on the monoid $\textnormal{Idem}_\infty^{\varepsilon,r,N}(C^*_u(X))/\sim$, and by the universal property of the Grothendieck completion it extends to a homomorphism $K_0^{\eps,r,N}(C_u^*(X))\to K_0(\cK(H))$, given by $[e]-[f]\mapsto D_{p,q}(e)-D_{p,q}(f)$.

    \emph{Homotopies of projection pairs.} Fix an $(\eps,r,N)$-quasi-idempotent $e$, or more generally a $(4\eps,r,4N)$-quasi-idempotent. If $(p_t,q_t)_{t\in[0,1]}$ is a path in $\cP_{l,\eps}(E_r)$ with $\sigma(p_t)=\sigma(q_t)$ for all $t$, then by \eqref{eq_J_estimate} $t\mapsto h(J_{p_t,q_t}(e))$ is a norm continuous path of idempotents, while $t\mapsto q_t\otimes I_k$ is a norm continuous path of projections. Hence $D_{p_0,q_0}(e)=D_{p_1,q_1}(e)$.

    \emph{Stabilization and direct sums of pairs.} If $s\in M_{l'}(\cK(H)^+)$ is a projection, then, up to conjugation by a permutation of coordinates,
    \[
    J_{p\oplus s,\,q\oplus s}(e)=J_{p,q}(e)\oplus J_{s,s}(e)=J_{p,q}(e)\oplus (s\otimes I_k),
    \]
    and since $h(s\otimes I_k)=s\otimes I_k$ we get $D_{p\oplus s,\,q\oplus s}(e)=D_{p,q}(e)$. More generally, $D_{p_1\oplus p_2,\,q_1\oplus q_2}(e)=D_{p_1,q_1}(e)+D_{p_2,q_2}(e)$.

    \emph{Conclusion.} Let $(p,q_n)$ and $(\wt p,q_m)$ be two normalized representatives of the same class in $K^0_\eps(E_r)$. Unwinding the equivalence relation of Definition \ref{def_K_hom_K0}, there are projections $s,\wt s$ such that $(p\oplus s,\,q_n\oplus s)$ and $(\wt p\oplus\wt s,\,q_m\oplus \wt s)$ are connected by a path in $\cP_{\infty,\eps}(E_r)$: indeed, the relation ``$x$ and $y$ become path connected after stabilizing each of them'' contains the two generating relations and is transitive, since stabilizations can be combined and direct summands can be reordered by scalar rotations, so it coincides with the generated relation. Note that $D_{p,q}(e)$ is invariant under simultaneous conjugation of $(p,q)$ by a scalar unitary $w$, because $J_{wpw^*,wqw^*}(e)=(w\otimes I_k)J_{p,q}(e)(w\otimes I_k)^*$. We may therefore conjugate the endpoints of the path by scalar unitaries, and then by Lemma \ref{lem_precribe_scalar_P1_U1}(2) the path may be taken to consist of pairs with equal scalar parts. By the previous two steps, $D_p(e)=D_{p\oplus s,\,q_n\oplus s}(e)=D_{\wt p\oplus \wt s,\,q_m\oplus\wt s}(e)=D_{\wt p}(e)$. Together with the first step this shows that the pairing is well defined. It is additive in the second variable by the first step, and additive in the first variable since $[p_1,q_{n_1}]+[p_2,q_{n_2}]=[p_1\oplus p_2,q_{n_1}\oplus q_{n_2}]$, a rotation by scalar unitaries brings $q_{n_1}\oplus q_{n_2}$ to the normalized form $q_{n_1+n_2}$ through pairs with equal scalar parts, and $D_{p_1\oplus p_2,\,q_{n_1}\oplus q_{n_2}}(e)=D_{p_1}(e)+D_{p_2}(e)$ by the third step.
\end{proof}

\subsection{Compatibility with the pairing of Guo and Yu}
In \cite[Section 4]{GuoYu2023Covering}, a pairing
\[
\<\cdot,\cdot\rat_{\eps,r,N,L}\colon K_0^{\eps,r,N}(C^*(M))\times K_0^L(C_0(M))\longrightarrow K_0(\cK)
\]
    between quantitative $K$-theory of the Roe algebra and Lipschitz $K$-theory is constructed for a Riemannian manifold $M$; it is defined when the quadruple $(\eps,r,N,L)$ is \emph{pairable}, meaning that $2^6rN^2L+\eps<2^{-68}N^{-16}$. As remarked there, the construction makes sense for metric spaces, and it applies verbatim to the uniform Roe algebra $C_u^*(X)$ acting on $H=\ell^2(X)$; in this setting $C_0(X)\subseteq\cK(H)$, so the intermediate ideal $\langle\cK\cup C_0(M)\rangle$ used there is simply $\cK(H)$. That pairing is defined by the difference construction of \cite{KasparovYu2006Coarse}: for $\alpha=[e]-[f]$ and $z=[p,n]$ normalized as in Lemma \ref{lem_LipK_prescribe}, one applies the difference construction $d(\cdot,\cdot)$ first to the pair $\big((I\otimes e)(p\otimes I),(I\otimes e)(q_n\otimes I)\big)$, and likewise for $f$, and then to the two resulting quasi-idempotents, before passing to $K_0(\cK)$ by the holomorphic functional calculus.

\begin{remark}\label{rem_GY_compatibility}
We expect that the above pairing is compatible with the pairing of Guo and Yu, in the following sense.
Let $(\eps,r,N,L)$ be pairable with $\eps\le\Omega_X(r,L)$, and let $L$ be sufficiently small, depending on $r$, $N$ and $N_X(r)$. Then for all $z\in K^L_0(C_0(X))$ and $\alpha\in K_0^{\eps,r,N}(C_u^*(X))$ one should have
\[
\<\nu_0(z),\alpha\rat=\<\alpha,z\rat_{\eps,r,N,L}\ \in K_0(\cK(H))\cong\Z,
\]
where on the left-hand side $\alpha$ is regarded as an element of $K_0^{\Omega_X(r,L),r,N}(C_u^*(X))$ via the forget-control map. We indicate the argument, but do not carry out the verification in detail. Write $z=[p,n]$ with $\sigma(p)=q_n$ and $\alpha=[e]-[f]$; both pairings are additive in $\alpha$, so it suffices to compare the contributions of $e$. Suppressing amplifications, both sides are classes in $K_0(\cK(H))$ of idempotents obtained by holomorphic functional calculus from explicit almost idempotents, and the claim reduces to the following three facts, each of which requires a homotopy through quasi-idempotents whose defect is controlled by $\|e^2-e\|$, $\|[p,e]\|$ and $N$, hence by Lemma \ref{lem_pairing_commutator} is small when $\eps$ and $L$ are small; the bookkeeping of constants, which determines the precise smallness condition, is what remains to be checked. First, the difference construction $d(a_1,a_2)$ for quasi-idempotents $a_1,a_2$ with $a_1-a_2\in\cK(H)$ represents the relative class of the pair $(a_1,a_2)$ in $K_0(\cK(H))$, so that $\<[e],z\rat_{\eps,r,N,L}$ is the relative class of $(ep,eq_n)$. Second, $ep$ and $pe$ are equivalent almost idempotents, with the equivalence implemented by the pair $(pe,ep)$ itself, since $(pe)(ep)$ and $(ep)(pe)$ are close to $pe$ and $ep$ respectively; hence the relative class of $(ep,eq_n)$ equals that of $(pe,q_ne)$. Third, $I_p(e)=pe+q_n(1-e)$ and $q_n=q_ne+q_n(1-e)$ are sums of almost orthogonal almost idempotents, since $q_n$ is scalar, and the holomorphic functional calculus of such a sum is homotopic to the sum of the functional calculi; hence $D_p(e)=[h(I_p(e))]-[q_n]$ is also the relative class of $(pe,q_ne)$.
\end{remark}

Granting this, the quantitative $K$-homology classes $\nu_0(z)$ coming from Lipschitz $K$-theory pair with quantitative $K$-theory exactly as in \cite{GuoYu2023Covering}, while the pairing of Definition \ref{def_pairing} is defined on all of $K^0_\eps(E_r)$.

\subsection{Comparison with finite test sets}
The proof of Proposition \ref{prop_pairing_well_defined} used the whole unit ball $E_r$ as test set in an essential way: Lemma \ref{lem_pairing_commutator} controls the commutator of $p$ with every $e\in M_k(\C_u^*[X]_r)$, uniformly in $k$, because $E_r$ contains the diagonal signs and all shifts of length at most $r$. If $p$ is only known to almost commute with a fixed finite subset $F$ of $E_r$, the same uniform estimate is not available, and the formula for the pairing need not be invariant under the equivalence relation defining $K^0_\eps(F)$. This is the main difference between our definition and the controlled $KK$-theory of \cite{WYUCT_control_kk2}, which uses a filtration by finite subsets. 

The two approaches can nevertheless be compared. Let $\calX'\subseteq\calX$ consist of the pairs $(F,\eps)\in\calX$ with $F$ finite, equipped with the induced partial order. Note that $QK^*(X)$ can be identified with the inverse limit over $\bigcup_{r\geq 0}\calX_r$. Given a coherent family $(g_{(G,\delta)})$ indexed by $\bigcup_{r\geq 0}\calX_r$ and a pair $(F,\eps)\in\calX'$, approximate each element of the finite set $F$ in norm by an element of the unit ball of $\C_u^*[X]_r$ for some $r$, to obtain $(G,\delta)\in\calX_r$ with $(F,\eps)\le(G,\delta)$, and set $\Xi(g)_{(F,\eps)}=\phi^{(G,\delta)(F,\eps)}(g_{(G,\delta)})$. Since any two such choices $(G_1,\delta_1),(G_2,\delta_2)$ are dominated by $(G_1\cup G_2,\min(\delta_1,\delta_2))$, coherence of $g$ shows that this is independent of the choice, and we obtain a canonical homomorphism
\[
\Xi\colon QK^*(X)=\varprojlim_{r\to \infty} QK^*_r(X)\longrightarrow \varprojlim_{(F,\eps)\in \calX'} K^*_\eps(F).
\]
Finite subsets are not cofinal in $\calX$ when $X$ is infinite: every $E_r$ contains the unit ball of $\ell^\infty(X)$, whose projections $\chi_A$, $A\subseteq X$, are at mutual distance $1$, so no finite subset approximates $E_r$ to within a small norm error. Thus $\Xi$ involves a genuine passage from uniform control over the whole propagation ball to control over finite test sets, and we do not know whether it is an isomorphism.

\begin{conjecture}
    Let $X$ be a discrete proper metric space with bounded geometry which is $(m,C)$-chain geodesic. Then the canonical map $\Xi$ is an isomorphism. 
\end{conjecture}

\newpage
\paragraph*{AI disclosure.}
AI tools were used for language editing and feedback
on the exposition and consistency of the manuscript.
The paper's original results and proofs were developed by
the authors, who reviewed the revisions and take full
responsibility for the final manuscript.

\printbibliography
\vspace{2em}
\noindent{\small
\textsc{Hao Guo}, i14, Melbourne, Australia\\
\textit{Email address}: \href{mailto:hao.guo@i14.ai}{\texttt{hao.guo@i14.ai}}

\medskip
\noindent\textsc{Chenxi Wang}, Department of Mathematics, Texas A\&M University, College Station, Texas, USA\\
\textit{Email address}: \href{mailto:ccw@tamu.edu}{\texttt{ccw@tamu.edu}}

\medskip
\noindent\textsc{Guoliang Yu}, Department of Mathematics, Texas A\&M University, College Station, Texas, USA\\
\textit{Email address}: \href{mailto:guoliangyu@tamu.edu}{\texttt{guoliangyu@tamu.edu}}
}

\end{document}